\newtheorem{theorem}{Theorem}
\newtheorem{corollary}[theorem]{Corollary}
\newtheorem{definition}[theorem]{Definition}
\newtheorem{example}[theorem]{Example}
\newtheorem{proposition}[theorem]{Proposition}
\newtheorem{remark}[theorem]{Remark}
\newenvironment{proof}[1][Proof]{\noindent\textbf{#1.} }{\ \rule{0.5em}{0.5em}}
\begin{document}

\title{\textbf{Fuzzy Set with Hyperbolic Valued Membership Function}}
\author{Chinmay Ghosh$^{1}$, Sanjib Kumar Datta$^{2}$, Soumen Mondal$^{3}$%
\qquad \\
$^{1}$Department of Mathematics\\
Kazi Nazrul University\\
Nazrul Road, P.O.- Kalla C.H.\\
Asansol-713340, West Bengal, India \\
chinmayarp@gmail.com \\
$^{2}$Department of Mathematics\\
University of Kalyani\\
P.O.-Kalyani, Dist-Nadia, PIN-741235,\\
West Bengal, India\\
sanjibdatta05@gmail.com\\
$^{3}$28, Dolua Dakshinpara Haridas Primary School\\
Beldanga, Murshidabad\\
Pin-742133\\
West Bengal, India\\
mondalsoumen79@gmail.com}
\date{}
\maketitle

\begin{abstract}
In this article, we have introdued $\mathbb{D}$-fuzzy sets. We have
discussed the notions of inclusion, union, intersection, complementation and
convexity of such $\mathbb{D}$-fuzzy sets. Also we have proved separation
theorem of convex $\mathbb{D}$-fuzzy sets. 

\textbf{Keywords and phrases}: Fuzzy set, hyperbolic number, hyperbolic
valued metric, convexity, separation theorem.
\end{abstract}

\section{\qquad Introduction and Preliminaries}

Let $%
\mathbb{R}
$ be the field of real numbers. Then $%
\mathbb{R}
^{2}=\left\{ \left( x,y\right) :x,y\in 
\mathbb{R}
\right\} $ forms a two dimensional vector space over $%
\mathbb{R}
$ with usual vector addition (componentwise) and scalar multiplication. The
standard basis is $\left\{ \left( 1,0\right) ,\left( 0,1\right) \right\} .$
Also the field structure of complex numbers $%
\mathbb{C}
$ can be formed from $%
\mathbb{R}
^{2}$ with respect to the said addition and the multiplication defined by%
\[
\left( x_{1},y_{1}\right) .\left( x_{2},y_{2}\right) =\left(
x_{1}x_{2}-y_{1}y_{2},x_{1}y_{2}+y_{1}x_{2}\right) . 
\]%
Denoting $\left( 1,0\right) ,\left( 0,1\right) $ by $\mathbf{1}$ and $%
\mathbf{i}$ respectively\textbf{,} we can represent any element $\left(
x,y\right) $ of this field by $x.\mathbf{1}+y.\mathbf{i,}$ and we write 
\[
\mathbb{C}
=\{x.\mathbf{1}+y.\mathbf{i}:x,y\in 
\mathbb{R}
\}. 
\]

If no confusion arises we may write $x+y\mathbf{i}$ instead of $x.\mathbf{1}%
+y.\mathbf{i.}$ The elements of $%
\mathbb{C}
$ describe the two dimensional Euclidean geometry.

Note that 
\[
\mathbf{i}^{2}=\mathbf{i.i=}\left( 0,1\right) .\left( 0,1\right) =\left(
-1,0\right) =-\left( 1,0\right) =-\mathbf{1.} 
\]

Another ring structure may be found from $%
\mathbb{R}
^{2}$ by redefining the multiplication as 
\[
\left( x_{1},y_{1}\right) .\left( x_{2},y_{2}\right) =\left(
x_{1}x_{2}+y_{1}y_{2},x_{1}y_{2}+y_{1}x_{2}\right) . 
\]

In this ring we denote $\left( 1,0\right) ,\left( 0,1\right) $ by $\mathbf{1}
$ and $\mathbf{k}$ respectively.

Then 
\[
\mathbf{k}^{2}=\mathbf{k.k=}\left( 0,1\right) .\left( 0,1\right) =\left(
1,0\right) =\mathbf{1.} 
\]

Though this ring is commutative and contains multiplicative identity, it has
divisors of zero. So it fails to be an integral domain. Any element $\left(
x,y\right) $ of this ring can be represented by $x.\mathbf{1}+y.\mathbf{k}$
or $x+y\mathbf{k.}$ The elements are called hyperbolic numbers and the ring
of all hyperbolic numbers is denoted by $\mathbb{D}$, i.e.,%
\[
\mathbb{D}=\{x+y\mathbf{k}:x,y\in 
\mathbb{R}
\}. 
\]

Henceforth we will write $1$ and $0$ as the elements $\mathbf{1=}\left(
1,0\right) $ and $\mathbf{0=}\left( 0,0\right) $ in $\mathbb{D}$
respectively. As $%
\mathbb{C}
$ describes two dimensional Euclidean geometry, $\mathbb{D}$ also describes
two dimensional Minkowski space-time (Lorentzian) geometry \cite{Cat 1}, 
\cite{Cat 2}.

The two hyperbolic numbers (zero divisors)%
\[
\mathbf{e}_{1}=\frac{1+\mathbf{k}}{2},\mathbf{e}_{2}=\frac{1-\mathbf{k}}{2} 
\]%
satisfy 
\[
\mathbf{e}_{1}^{2}=\mathbf{e}_{1},\mathbf{e}_{2}^{2}=\mathbf{e}_{2},\mathbf{e%
}_{1}+\mathbf{e}_{2}=1,\mathbf{e}_{1}\mathbf{e}_{2}=0. 
\]

Further an arbitrary element $\alpha =a_{1}+a_{2}\mathbf{k}\in \mathbb{D}$
can be uniquely represented as 
\[
\mathbb{\alpha =(}a_{1}+a_{2})\mathbf{e}_{1}+\mathbb{(}a_{1}-a_{2})\mathbf{e}%
_{2}, 
\]%
and hence $\left\{ \mathbf{e}_{1},\mathbf{e}_{2}\right\} $ forms an
idempotent basis in $\mathbb{D}.$

Sum and the product of two hyperbolic numbers can be defined pointwise with
the idempotent basis.

We say that a hyperbolic number $\alpha =\alpha _{1}\mathbf{e}_{1}+\alpha
_{2}\mathbf{e}_{2}\in \mathbb{D}$ is positive if $\alpha _{1},\alpha _{2}>0.$
Thus the set of positive hyperbolic numbers $\mathbb{D}^{+}$ is given by%
\begin{eqnarray*}
\mathbb{D}^{+} &\mathbb{=}&\mathbb{\{}\alpha =\alpha _{1}\mathbf{e}%
_{1}+\alpha _{2}\mathbf{e}_{2}:\alpha _{1},\alpha _{2}>0\mathbb{\}} \\
&=&\mathbb{\{}\alpha =a_{1}+a_{2}\mathbf{k}:a_{1}>\left\vert
a_{2}\right\vert \mathbb{\}}.
\end{eqnarray*}

Let $\mathbb{O}$ be the set of all zero divisors in $\mathbb{D}$ and $%
\mathbb{O}_{0}=\mathbb{O}\cup \{\mathbf{0}\}.$\ We use the notation $\mathbb{%
D}_{0}^{+}=\mathbb{D}^{+}\cup \mathbb{O}_{0}.$

For $\alpha ,\beta \in \mathbb{D},~$define~a relation $\preceq $ on $\mathbb{%
D}$~\cite{Lun}$\ $by $\alpha \preceq \beta $ whenever $\beta -\alpha \in 
\mathbb{D}_{0}^{+}.$ This relation is reflexive, anti-symmetric as well as
transitive and hence defines a partial order relation on $\mathbb{D}.$ If we
write the hyperbolic numbers $\alpha ,\beta $ in idempotent representation
as $\mathbb{\alpha =}\alpha _{1}\mathbf{e}_{1}+\alpha _{2}\mathbf{e}_{2}$
and $\mathbb{\beta =\beta }_{1}\mathbf{e}_{1}+\mathbb{\beta }_{2}\mathbf{e}%
_{2},$ then $\alpha \preceq \beta $ implies that $\alpha _{1}\leq \beta _{1}$
and $\alpha _{2}\leq \beta _{2}.$ And by $\alpha \prec \beta $ we mean $%
\alpha _{1}<\beta _{1}$ and $\alpha _{2}<\beta _{2}.$ Two hyperbolic numbers 
$\alpha ,\beta ~$are said to be comparable if either $\alpha \prec \beta $
or $\alpha =\beta $ or $\beta \prec \alpha .$ We write $\max\nolimits_{%
\mathbb{D}}\left\{ \alpha ,\beta \right\} =\alpha $ if $\beta \preceq \alpha 
$ and $\max\nolimits_{\mathbb{D}}\left\{ \alpha ,\beta \right\} =\beta $ if $%
\alpha \preceq \beta .$ Similarly $\min_{\mathbb{D}}\left\{ \alpha ,\beta
\right\} =\alpha $ if $\alpha \preceq \beta $ and $\min_{\mathbb{D}}\left\{
\alpha ,\beta \right\} =\beta $ if $\beta \preceq \alpha .$ For more than
two hyperbolic numbers $\max\nolimits_{\mathbb{D}}$ and $\min_{\mathbb{D}}$
are defined only if any pair of the given hyperbolic numbers are comparable.

Let $\alpha =\alpha _{1}\mathbf{e}_{1}+\alpha _{2}\mathbf{e}_{2},\beta =%
\mathbb{\beta }_{1}\mathbf{e}_{1}+\mathbb{\beta }_{2}\mathbf{e}_{2}\in 
\mathbb{D}$ with $\alpha \preceq \beta .$ The closed hyperbolic interval ($%
\mathbb{D}-$interval) \cite{Bal} is defined by 
\[
\left[ \alpha ,\beta \right] _{\mathbb{D}}=\left\{ \zeta \in \mathbb{D}%
:\alpha \preceq \zeta \preceq \beta \right\} . 
\]

Similarly the open hyperbolic interval ($\mathbb{D}-$interval) is defined by%
\[
\left( \alpha ,\beta \right) _{\mathbb{D}}=\left\{ \zeta \in \mathbb{D}%
:\alpha \prec \zeta \prec \beta \right\} . 
\]

The hyperbolic length of the $\mathbb{D}-$interval $\left[ \alpha ,\beta %
\right] _{\mathbb{D}}$ or $\left( \alpha ,\beta \right) _{\mathbb{D}}$\ is
defined by 
\[
l_{\mathbb{D}}\left( \left[ \alpha ,\beta \right] _{\mathbb{D}}\right)
=\beta -\alpha . 
\]

$\left[ \alpha ,\beta \right] _{\mathbb{D}}$ is called a degenerate closed $%
\mathbb{D}-$interval if $\beta -\alpha \in \mathbb{D}^{+}\cap \mathbb{O}$
and $\left[ \alpha ,\beta \right] _{\mathbb{D}}$ is called a nondegenerate
closed $\mathbb{D}-$interval if $\beta -\alpha $ $\in \mathbb{D}%
^{+}\backslash \mathbb{O}$.

The hyperbolic modulus of a hyperbolic number is defined by%
\[
\left\vert \alpha \right\vert _{k}=\left\vert \alpha _{1}\mathbf{e}%
_{1}+\alpha _{2}\mathbf{e}_{2}\right\vert _{k}=\left\vert \alpha
_{1}\right\vert \mathbf{e}_{1}+\left\vert \alpha _{2}\right\vert \mathbf{e}%
_{2}\in \mathbb{D}_{0}^{+} 
\]%
where $\left\vert .\right\vert $ is the modulus of a real number. $\alpha
=\alpha _{1}\mathbf{e}_{1}+\alpha _{2}\mathbf{e}_{2}$ is said to be nearer
to $1$ or $0$ in the hyperbolic sense if both $\alpha _{1}$ and $\alpha _{2}$
are nearer to $1$ or $0$ in the real sense respectively.

The hyperbolic valued norm \cite{Al} on a $\mathbb{D-}$module space $X$ is a
mapping $\left\Vert .\right\Vert _{\mathbb{D}}:X\rightarrow \mathbb{D}^{+}$
that satisfies the following properties:

$(i)$ $\left\Vert x\right\Vert _{\mathbb{D}}\succeq 0$ for all $x\in X$ and $%
\left\Vert x\right\Vert _{\mathbb{D}}=0$ iff $x=0\in X.$

$(ii)$ $\left\Vert \alpha x\right\Vert _{\mathbb{D}}=\left\vert \alpha
\right\vert _{k}\left\Vert x\right\Vert _{\mathbb{D}}$ for all $\alpha \in 
\mathbb{D}$ and for all $x\in X.$

$(iii)$ $\left\Vert x+y\right\Vert _{\mathbb{D}}\preceq \left\Vert
x\right\Vert _{\mathbb{D}}+\left\Vert y\right\Vert _{\mathbb{D}}$ for all $%
x,y\in X.$

A function $d_{\mathbb{D}}:X\times X\longrightarrow \mathbb{D}_{0}^{+}$
defined on a nonempty set $X$ satisfying the following:

$(i)$ $d_{\mathbb{D}}(x,y)\succeq 0$ for all $x,y\in X$ and $d_{\mathbb{D}%
}(x,y)=0$ if and only if $x=y,$

$(ii)$ $d_{\mathbb{D}}(x,y)=d_{\mathbb{D}}(y,x)$ for all $x,y\in X,$

$(iii)$ $d_{\mathbb{D}}(x,y)\preceq d_{\mathbb{D}}(x,z)+d_{\mathbb{D}}(z,y)$
for all $x,y,z\in X,$

is called a hyperbolic valued metric \cite{Gh} or $\mathbb{D-}$metric on $X.$

Being motivated from the articles \cite{Bal} and \cite{Za} we wish to find
fuzzy sets in the hyperbolic settings.

In the book of G. B. Price \cite{Pri}, the description of hyperbolic numbers
is found. But G. Sobczyk \cite{Sob} first realized the importance of such
numbers and called them sister of complex numbers. The two books \cite{Cat 1}
and \cite{Cat 2} are good references for hyperbolic numbers. The readers of
this paper may go through the books \cite{alp}, \cite{Lun} and \cite{Ola}.
Recently many researchers (\cite{Bal}, \cite{Gh}, \cite{Sai}, \cite{Tell})
are investigating different aspets of hyperbolic numbers.

\section{Main Results}

Let $X$ be a set of points and $x\in X.$ A $\mathbb{D}$-fuzzy set $A$ in $X$
is characterized by a hyperbolic valued membership function ($\mathbb{D}$%
-membership function) $f_{A}^{\mathbb{D}}:$ $X\rightarrow \lbrack 0,1]_{%
\mathbb{D}}$ which associates with each point in $X$ a hyperbolic number in
the hyperbolic interval $[0,1]_{\mathbb{D}},$ with the value of $f_{A}^{%
\mathbb{D}}(x)$ at $x$ representing the "$\mathbb{D}$-grade of $\mathbb{D}$%
-membership" of $x$ in $A.$ Thus the nearer the value of $f_{A}^{\mathbb{D}%
}(x)$ to $1$, the higher $\mathbb{D}$-grade of membership of $x$ in $A.$

A $\mathbb{D}$-fuzzy set $A$ in $X$ with $\mathbb{D}$-membership function $%
f_{A}^{\mathbb{D}}(x),$ can be written as 
\[
A=\{(x,f_{A}^{\mathbb{D}}(x)):x\in X\}. 
\]

Let $f_{A}^{\mathbb{D}}(x)=f_{A}^{1}(x)\mathbf{e}_{1}+f_{A}^{2}(x)\mathbf{e}%
_{2}$ be the idempotent representation of $f_{A}^{\mathbb{D}}(x).$ Then $%
A_{1}=\{(x,f_{A}^{1}(x):x\in X\}$ and $A_{2}=\{(x,f_{A}^{2}(x):x\in X\}$ are
two ordinary fuzzy sets.

Note that if $A$ is a set in the ordinary sense, its $\mathbb{D}$-membership
function is of the form%
\[
f_{A}^{\mathbb{D}}(x)=\left\{ 
\begin{array}{c}
1\text{ if }x\in A \\ 
0\text{ if }x\notin A%
\end{array}%
\right. 
\]

\begin{example}
Let $X=\mathbb{D}$ and $A=\{x\in \mathbb{D}$: $x\succ 1\}$ and $f_{A}^{%
\mathbb{D}}(x):X\rightarrow \lbrack 0,1]_{\mathbb{D}}$ be a function.
Representative values of such function might be : $f_{A}^{\mathbb{D}}(%
\mathbf{e}_{1})=0,$ $f_{A}^{\mathbb{D}}(0)=0,$ $f_{A}^{\mathbb{D}}(5\mathbf{e%
}_{1}+2\mathbf{e}_{2})=0.06\mathbf{e}_{1}+0.04\mathbf{e}_{2},$ $f_{A}^{%
\mathbb{D}}(5)=0.06\mathbf{e}_{1}+0.07\mathbf{e}_{2},$ $f_{A}^{\mathbb{D}%
}(200)=0.4\mathbf{e}_{1}+0.5\mathbf{e}_{2}$ etc.
\end{example}

A $\mathbb{D}$-fuzzy set is empty if and only if its $\mathbb{D}$-membership
function is zero on $X.$ Two $\mathbb{D}$-fuzzy sets $A$ and $B$ are equal
if and only if $f_{A}^{\mathbb{D}}(x)=f_{B}^{\mathbb{D}}(x)$ for all $x\in
X. $ The complement of a $\mathbb{D}$-fuzzy set $A$ is denoted by $A^{\prime
}$ and is defined by 
\[
f_{A^{^{\prime }}}^{\mathbb{D}}=1-f_{A}^{\mathbb{D}}. 
\]

Let $A$ and $B$ be two $\mathbb{D}$-fuzzy sets. Then $A$ is contained in $B$
if and only if $f_{A}^{\mathbb{D}}\preceq f_{B}^{\mathbb{D}},$ i.e.,%
\[
A\subset B\Leftrightarrow f_{A}^{\mathbb{D}}\preceq f_{B}^{\mathbb{D}}. 
\]

The union of two $\mathbb{D}$-fuzzy sets $A$ and $B$ with $\mathbb{D}$%
-membership functions $f_{A}^{\mathbb{D}}$ and $f_{B}^{\mathbb{D}}$
respectively is a $\mathbb{D}$-fuzzy set $C\,=A\cup B,$ whose $\mathbb{D}$%
-membership function is 
\[
f_{C}^{\mathbb{D}}(x)=\max\nolimits_{\mathbb{D}}\{f_{A}^{\mathbb{D}%
}(x),f_{B}^{\mathbb{D}}(x)\},\text{ }x\in X 
\]%
or, in short,%
\[
f_{C}^{\mathbb{D}}=f_{A}^{\mathbb{D}}\vee f_{B}^{\mathbb{D}}. 
\]

\begin{proposition}
The union of two $\mathbb{D}$-fuzzy sets $A$ and $B$ is the smallest $%
\mathbb{D}$-fuzzy set containing $A$ and $B.$
\end{proposition}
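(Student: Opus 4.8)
The plan is to reduce the problem to the familiar fact about ordinary fuzzy sets using the idempotent decomposition of the membership function. Write the $\mathbb{D}$-membership functions in idempotent form as $f_A^{\mathbb{D}}(x)=f_A^1(x)\mathbf{e}_1+f_A^2(x)\mathbf{e}_2$ and $f_B^{\mathbb{D}}(x)=f_B^1(x)\mathbf{e}_1+f_B^2(x)\mathbf{e}_2$. Since the partial order $\preceq$ on $\mathbb{D}$ is componentwise in this basis (as recalled in the preliminaries, $\alpha\preceq\beta$ iff $\alpha_1\le\beta_1$ and $\alpha_2\le\beta_2$), the hyperbolic maximum should decompose as $\max_{\mathbb{D}}\{f_A^{\mathbb{D}}(x),f_B^{\mathbb{D}}(x)\}=\max\{f_A^1(x),f_A^2(x)\}$... more precisely $f_C^1(x)=\max\{f_A^1(x),f_B^1(x)\}$ and $f_C^2(x)=\max\{f_A^2(x),f_B^2(x)\}$ componentwise.

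There are two things to prove: first that $C=A\cup B$ contains both $A$ and $B$, and second that it is the smallest such $\mathbb{D}$-fuzzy set. For containment, I would verify directly from the definition $A\subset C\Leftrightarrow f_A^{\mathbb{D}}\preceq f_C^{\mathbb{D}}$: in each component $f_A^i(x)\le\max\{f_A^i(x),f_B^i(x)\}=f_C^i(x)$, so $f_A^{\mathbb{D}}(x)\preceq f_C^{\mathbb{D}}(x)$ for all $x$, giving $A\subset C$, and symmetrically $B\subset C$. For minimality, suppose $D$ is any $\mathbb{D}$-fuzzy set with $A\subset D$ and $B\subset D$, so that $f_A^{\mathbb{D}}\preceq f_D^{\mathbb{D}}$ and $f_B^{\mathbb{D}}\preceq f_D^{\mathbb{D}}$. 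In components this reads $f_A^i(x)\le f_D^i(x)$ and $f_B^i(x)\le f_D^i(x)$ for $i=1,2$, whence $f_C^i(x)=\max\{f_A^i(x),f_B^i(x)\}\le f_D^i(x)$, and therefore $f_C^{\mathbb{D}}\preceq f_D^{\mathbb{D}}$, i.e. $C\subset D$. This shows $C$ is contained in every $\mathbb{D}$-fuzzy set containing both $A$ and $B$, which is the required minimality.

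The one genuine subtlety, and the step I expect to need the most care, is the well-definedness of $\max_{\mathbb{D}}$ in the definition of the union. The preliminaries warn that $\max_{\mathbb{D}}$ of hyperbolic numbers is only defined when the numbers are comparable; yet for arbitrary $f_A^{\mathbb{D}}(x)$ and $f_B^{\mathbb{D}}(x)$ the values need not be comparable under $\preceq$. The resolution is that the componentwise maximum $f_C^i(x)=\max\{f_A^i(x),f_B^i(x)\}$ is always a well-defined real number (ordinary fuzzy sets live in $[0,1]$, which is totally ordered), and one checks that $f_C^{\mathbb{D}}(x)=f_C^1(x)\mathbf{e}_1+f_C^2(x)\mathbf{e}_2$ is the least upper bound of $\{f_A^{\mathbb{D}}(x),f_B^{\mathbb{D}}(x)\}$ in the lattice $(\mathbb{D},\preceq)$. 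Interpreting $\max_{\mathbb{D}}$ as this join (rather than as one of the two operands) is what makes the union well-defined for all pairs, and it is exactly the componentwise join that drives both the containment and minimality arguments above. I would make this interpretation explicit at the start of the proof, since the rest is then a routine componentwise verification inherited from the classical result for ordinary fuzzy sets.
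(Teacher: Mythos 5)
Your proof is correct, and it shares the same two-step skeleton as the paper's proof (first show $C=A\cup B$ contains both $A$ and $B$, then show that any $\mathbb{D}$-fuzzy set $D$ containing $A$ and $B$ satisfies $f_{C}^{\mathbb{D}}\preceq f_{D}^{\mathbb{D}}$), but you execute it by a genuinely different route. The paper stays entirely at the abstract level: it treats $\max_{\mathbb{D}}$ as a primitive, asserts $\max_{\mathbb{D}}\{f_{A}^{\mathbb{D}}(x),f_{B}^{\mathbb{D}}(x)\}\succeq f_{A}^{\mathbb{D}}(x)$ and $\succeq f_{B}^{\mathbb{D}}(x)$, and then concludes from $f_{D}^{\mathbb{D}}\succeq f_{A}^{\mathbb{D}}$ and $f_{D}^{\mathbb{D}}\succeq f_{B}^{\mathbb{D}}$ that $f_{D}^{\mathbb{D}}\succeq \max_{\mathbb{D}}\{f_{A}^{\mathbb{D}},f_{B}^{\mathbb{D}}\}$. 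You instead pass to the idempotent components and run the classical Zadeh argument in each coordinate. What your route buys is exactly the point you flag at the end: as the paper defines it, $\max_{\mathbb{D}}\{\alpha,\beta\}$ exists only when $\alpha$ and $\beta$ are comparable, and membership values such as $0.5\mathbf{e}_{1}+0.2\mathbf{e}_{2}$ and $0.3\mathbf{e}_{1}+0.4\mathbf{e}_{2}$ are not, so read literally both the paper's definition of union and its proof are incomplete; moreover the paper's final step (that $f_{D}^{\mathbb{D}}$ dominating both functions forces it to dominate their $\max_{\mathbb{D}}$) is precisely the least-upper-bound property, which is automatic for the componentwise join you use but is never justified for the paper's case-based $\max_{\mathbb{D}}$. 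Reinterpreting $\max_{\mathbb{D}}$ as the join in $(\mathbb{D},\preceq)$ makes the union total and turns every inequality in the argument into a routine componentwise check; the only cost is that you are quietly amending the paper's definition, which you correctly announce as the step requiring care. The paper's version is shorter but inherits the comparability gap.
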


\begin{proof}
Let $C\,=A\cup B,$ whose $\mathbb{D}$-membership function is 
\[
f_{C}^{\mathbb{D}}(x)=\max\nolimits_{\mathbb{D}}\{f_{A}^{\mathbb{D}%
}(x),f_{B}^{\mathbb{D}}(x)\},\text{ }x\in X 
\]

Then 
\[
\max\nolimits_{\mathbb{D}}\{f_{A}^{\mathbb{D}}(x),f_{B}^{\mathbb{D}%
}(x)\}\succeq f_{A}^{\mathbb{D}}(x) 
\]

and%
\[
\max\nolimits_{\mathbb{D}}\{f_{A}^{\mathbb{D}}(x),f_{B}^{\mathbb{D}%
}(x)\}\succeq f_{B}^{\mathbb{D}}(x). 
\]

Now let $D$ be any $\mathbb{D}$-fuzzy set containing $A$ and $B.$

Then 
\[
f_{D}^{\mathbb{D}}(x)\succeq f_{A}^{\mathbb{D}}(x)\text{ , }f_{D}^{\mathbb{D}%
}(x)\succeq f_{B}^{\mathbb{D}}(x). 
\]

Hence%
\[
f_{D}^{\mathbb{D}}(x)\succeq \max\nolimits_{\mathbb{D}}\{f_{A}^{\mathbb{D}%
}(x),f_{B}^{\mathbb{D}}(x)\}=f_{C}^{\mathbb{D}}(x), 
\]

which implies that 
\[
C\subset D. 
\]
\end{proof}

$\ $The intersection of two $\mathbb{D}$-fuzzy sets $A$ and $B$ with $%
\mathbb{D}$-membership functions $f_{A}^{\mathbb{D}}$ and $f_{B}^{\mathbb{D}%
} $ respectively is a $\mathbb{D}$-fuzzy set $C\,=A\cap B,$ whose $\mathbb{D}
$-membership function is%
\[
f_{C}^{\mathbb{D}}(x)=\min\nolimits_{\mathbb{D}}\{f_{A}^{\mathbb{D}%
}(x),f_{B}^{\mathbb{D}}(x)\},\text{ }x\in X 
\]%
or%
\[
f_{C}^{\mathbb{D}}=f_{A}^{\mathbb{D}}\wedge f_{B}^{\mathbb{D}}. 
\]

As in the case of union, it is easy to prove the following proposition.

\begin{proposition}
The intersection of two $\mathbb{D}$-fuzzy sets $A$ and $B$ is the largest $%
\mathbb{D}$-fuzzy set contained in both $A$ and $B.$
\end{proposition}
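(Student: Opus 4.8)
The plan is to mirror the proof of the preceding proposition, replacing $\max_{\mathbb{D}}$ by $\min_{\mathbb{D}}$ and reversing every inequality. Write $C = A \cap B$, so that $f_C^{\mathbb{D}}(x) = \min_{\mathbb{D}}\{f_A^{\mathbb{D}}(x), f_B^{\mathbb{D}}(x)\}$ for each $x \in X$. First I would check that $C$ is contained in both $A$ and $B$: by the definition of $\min_{\mathbb{D}}$ we have $\min_{\mathbb{D}}\{f_A^{\mathbb{D}}(x), f_B^{\mathbb{D}}(x)\} \preceq f_A^{\mathbb{D}}(x)$ and likewise $\preceq f_B^{\mathbb{D}}(x)$ for every $x$, so that $f_C^{\mathbb{D}} \preceq f_A^{\mathbb{D}}$ and $f_C^{\mathbb{D}} \preceq f_B^{\mathbb{D}}$, which by the definition of inclusion gives $C \subset A$ and $C \subset B$.

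Next, for the maximality claim, let $D$ be any $\mathbb{D}$-fuzzy set contained in both $A$ and $B$, so that $f_D^{\mathbb{D}} \preceq f_A^{\mathbb{D}}$ and $f_D^{\mathbb{D}} \preceq f_B^{\mathbb{D}}$. The crucial step is to conclude that $f_D^{\mathbb{D}}(x) \preceq \min_{\mathbb{D}}\{f_A^{\mathbb{D}}(x), f_B^{\mathbb{D}}(x)\} = f_C^{\mathbb{D}}(x)$ for all $x$, whence $D \subset C$, establishing that $C$ is the largest such $\mathbb{D}$-fuzzy set.

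The main (though modest) obstacle is justifying this last implication, since $\min_{\mathbb{D}}$ is a greatest-lower-bound operation and the values $f_A^{\mathbb{D}}(x)$ and $f_B^{\mathbb{D}}(x)$ need not themselves be comparable under $\preceq$. I would handle it by passing to the idempotent representation: writing $f_D^{\mathbb{D}}(x) = d_1\mathbf{e}_1 + d_2\mathbf{e}_2$ and similarly $f_A^{\mathbb{D}}(x) = a_1\mathbf{e}_1 + a_2\mathbf{e}_2$, $f_B^{\mathbb{D}}(x) = b_1\mathbf{e}_1 + b_2\mathbf{e}_2$, the relation $\preceq$ is equivalent to the pair of real inequalities on the $\mathbf{e}_1$- and $\mathbf{e}_2$-components. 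The hypotheses then read $d_1 \le a_1$, $d_1 \le b_1$, $d_2 \le a_2$, $d_2 \le b_2$, which yield $d_1 \le \min\{a_1, b_1\}$ and $d_2 \le \min\{a_2, b_2\}$; recombining via the basis $\{\mathbf{e}_1, \mathbf{e}_2\}$ gives precisely $f_D^{\mathbb{D}}(x) \preceq \min_{\mathbb{D}}\{f_A^{\mathbb{D}}(x), f_B^{\mathbb{D}}(x)\}$. This completes the argument, and by duality it is entirely parallel to the union case.
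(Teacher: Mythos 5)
Your proof is correct and is essentially the approach the paper intends: the paper gives no separate proof of this proposition, remarking only that it follows ``as in the case of union,'' and your argument is precisely the dualization of that union proof (replace $\max_{\mathbb{D}}$ by $\min_{\mathbb{D}}$, reverse $\succeq$ to $\preceq$, and swap the roles of the containing set $D$). Your extra justification of the final step via the idempotent components is a refinement the paper does not bother with --- and a useful one, since the paper's $\min_{\mathbb{D}}$ is only defined for comparable pairs, so reading it componentwise as you do is what makes the inequality $f_{D}^{\mathbb{D}}(x)\preceq \min_{\mathbb{D}}\{f_{A}^{\mathbb{D}}(x),f_{B}^{\mathbb{D}}(x)\}$ meaningful for all $x$.
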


Let $A,B,C$ be $\mathbb{D}$-fuzzy sets with corresponding $\mathbb{D}$%
-membership functions $f_{A}^{\mathbb{D}}$, $f_{B}^{\mathbb{D}}$ and $f_{C}^{%
\mathbb{D}}$ respectively. Then the following are satisfied:

\begin{itemize}
\item De Morgan's Laws:%
\begin{equation}
\left( A\cup B\right) ^{\prime }=A^{\prime }\cap B^{\prime }  \label{1.a}
\end{equation}%
\begin{equation}
\left( A\cap B\right) ^{\prime }=A^{\prime }\cup B^{\prime }.  \label{1.b}
\end{equation}

\item Distributive Laws:%
\begin{equation}
C\cap \left( A\cup B\right) =\left( C\cap A\right) \cup \left( C\cap B\right)
\label{2.a}
\end{equation}%
\begin{equation}
C\cup \left( A\cap B\right) =\left( C\cup A\right) \cap \left( C\cup
B\right) .  \label{2.b}
\end{equation}
\end{itemize}

Property (\ref{1.a}) can be proved by using the relation%
\[
1-\max\nolimits_{\mathbb{D}}\{f_{A}^{\mathbb{D}}(x),f_{B}^{\mathbb{D}%
}(x)\}=\min\nolimits_{\mathbb{D}}\left\{ 1-f_{A}^{\mathbb{D}}(x),1-f_{B}^{%
\mathbb{D}}(x)\right\} 
\]

which can be easily verified by testing it for the two possible cases: $%
f_{A}^{\mathbb{D}}(x)\succ f_{B}^{\mathbb{D}}(x)$ and $f_{A}^{\mathbb{D}%
}(x)\prec f_{B}^{\mathbb{D}}(x).$

Similarly we can prove property (\ref{1.b}).

Property (\ref{2.a}) can be proved by using the relation%
\[
\max\nolimits_{\mathbb{D}}\left\{ f_{C}^{\mathbb{D}}(x),\min\nolimits_{%
\mathbb{D}}\left\{ f_{A}^{\mathbb{D}}(x),f_{B}^{\mathbb{D}}(x)\right\}
\right\} =\min\nolimits_{\mathbb{D}}\left\{ \max\nolimits_{\mathbb{D}%
}\{f_{A}^{\mathbb{D}}(x),f_{C}^{\mathbb{D}}(x)\},\max\nolimits_{\mathbb{D}%
}\{f_{C}^{\mathbb{D}}(x),f_{B}^{\mathbb{D}}(x)\}\right\} 
\]

which can be verified by testing it for the six possible cases: 
\[
f_{A}^{\mathbb{D}}(x)\succ f_{B}^{\mathbb{D}}(x)\succ f_{C}^{\mathbb{D}}(x),%
\text{ }f_{A}^{\mathbb{D}}(x)\succ f_{C}^{\mathbb{D}}(x)\succ f_{B}^{\mathbb{%
D}}(x),\text{ }f_{B}^{\mathbb{D}}(x)\succ f_{A}^{\mathbb{D}}(x)\succ f_{C}^{%
\mathbb{D}}(x), 
\]%
\[
f_{B}^{\mathbb{D}}(x)\succ f_{C}^{\mathbb{D}}(x)\succ f_{A}^{\mathbb{D}}(x),%
\text{ }f_{C}^{\mathbb{D}}(x)\succ f_{A}^{\mathbb{D}}(x)\succ f_{B}^{\mathbb{%
D}}(x),\text{ }f_{C}^{\mathbb{D}}(x)\succ f_{B}^{\mathbb{D}}(x)\succ f_{A}^{%
\mathbb{D}}(x). 
\]

Similarly we can prove property (\ref{2.b}).

The algebraic sum of $\mathbb{D}$-fuzzy sets $A$ and $B,$ denoted by $A+B,$
is defined in terms of the $\mathbb{D}$-membership functions of $A$ and $B$
by%
\[
f_{A+B}^{\mathbb{D}}=f_{A}^{\mathbb{D}}+f_{B}^{\mathbb{D}} 
\]%
provided the sum $f_{A}^{\mathbb{D}}+f_{B}^{\mathbb{D}}\preceq 1.$

The algebraic product of $\mathbb{D}$-fuzzy sets $A$ and $B,$ denoted by $%
AB, $ is defined in terms of the $\mathbb{D}$-membership functions of $A$
and $B$ by%
\[
f_{AB}^{\mathbb{D}}=f_{A}^{\mathbb{D}}f_{B}^{\mathbb{D}}. 
\]

The absolute difference of $\mathbb{D}$-fuzzy sets $A$ and $B,$ denoted by $%
\left\vert A-B\right\vert _{\mathbb{D}},$ is defined in terms of the $%
\mathbb{D}$-membership functions of $A$ and $B$ by%
\[
f_{\left\vert A-B\right\vert _{\mathbb{D}}}^{\mathbb{D}}=\left\vert f_{A}^{%
\mathbb{D}}-f_{B}^{\mathbb{D}}\right\vert _{\mathbb{D}}. 
\]

Let $A$ and $B$ be two $\mathbb{D-}$fuzzy sets defined on the Universal sets 
$X$ and $Y$ with $\mathbb{D}$-membership functions $f_{A}^{\mathbb{D}}(x)$
and $f_{B}^{\mathbb{D}}(y)$ respectively$.$ Then the cartesian product of $A$
and $B,$ denoted by $A\times B,$ is defined in terms of the $\mathbb{D}$%
-membership functions of $A$ and $B$ by%
\[
f_{A\times B}^{\mathbb{D}}((x,y))=\min\nolimits_{\mathbb{D}}\{f_{A}^{\mathbb{%
D}}(x),f_{B}^{\mathbb{D}}(y)\}. 
\]

Let $A,$ $B$ and $\Lambda $ be arbitrary $\mathbb{D}$-fuzzy sets. The convex
combination of $A,$ $B$ and $\Lambda $ is denoted by $(A,$ $B;\Lambda )$ and
is defined by%
\[
(A,B;\Lambda )=\Lambda A+\Lambda ^{\prime }B 
\]

where $\Lambda ^{\prime }$ is the complement of $\Lambda .$

A basic property of the convex combination of $A,$ $B$ and $\Lambda $ is
expressed by 
\[
A\cap B\subset (A,B;\Lambda )\subset A\cup B\text{ \ \ for all }\Lambda . 
\]

This property is an immediate consequence of the inequalities%
\[
\min\nolimits_{\mathbb{D}}\left\{ f_{A}^{\mathbb{D}}(x),f_{B}^{\mathbb{D}%
}(x)\right\} \preceq \lambda f_{A}^{\mathbb{D}}(x)+(1-\lambda )f_{B}^{%
\mathbb{D}}(x)\preceq \max\nolimits_{\mathbb{D}}\left\{ f_{A}^{\mathbb{D}%
}(x),f_{B}^{\mathbb{D}}(x)\right\} ,\text{ }x\in X 
\]

which hold for all $\lambda \in \lbrack 0,1]_{\mathbb{D}}.$

A binary $\mathbb{D}$-fuzzy relation in $X$ is a $\mathbb{D}$-fuzzy set in
the product space $X\times X.$ Moreover, an $n-$ary $\mathbb{D}$-fuzzy
relation in $X$ is a $\mathbb{D}$-fuzzy set $A$ in the product space $%
X\times X\times ...\times X$ whose $\mathbb{D}$-membership function is of
the form $f_{A}^{\mathbb{D}}(x_{1},x_{2},...,x_{n}),$ where $x_{i}\in
X,i=1,2,...,n.$

Now we define convexity of $\mathbb{D-}$fuzzy sets in two different ways and
prove their equivalence.

\begin{definition}
\label{DC1} A $\mathbb{D-}$fuzzy set $A$ is convex if and only if the sets $%
\Gamma _{\alpha }$ defined by%
\[
\Gamma _{\alpha }=\{x:f_{A}^{\mathbb{D}}(x)\succeq \alpha \} 
\]

are convex for all $\alpha \in (0,1]_{\mathbb{D}}.$
\end{definition}

\begin{definition}
\label{DC2} A $\mathbb{D-}$fuzzy set $A$ is convex if and only if%
\begin{equation}
f_{A}^{\mathbb{D}}\{\lambda x_{1}+(1-\lambda )x_{2}\}\succeq \min\nolimits_{%
\mathbb{D}}\left\{ f_{A}^{\mathbb{D}}(x_{1}),f_{A}^{\mathbb{D}%
}(x_{2})\right\}  \label{DCR2}
\end{equation}

for all $x_{1},x_{2}$ in $X$ and for all $\lambda $ in $[0,1]_{\mathbb{D}}.$
\end{definition}

\begin{remark}
The above two definitions are equivalent.

Let $A$ is convex in the sense of Definition \ref{DC1} and $\alpha =f_{A}^{%
\mathbb{D}}(x_{1})\preceq f_{A}^{\mathbb{D}}(x_{2}),$ then $x_{2}\in \Gamma
_{\alpha }$ and $\lambda x_{1}+(1-\lambda )x_{2}\in \Gamma _{\alpha }$ by
the convexity of $\Gamma _{\alpha }.$ Hence%
\[
f_{A}^{\mathbb{D}}\{\lambda x_{1}+(1-\lambda )x_{2}\}\succeq \alpha =f_{A}^{%
\mathbb{D}}(x_{1})=\min\nolimits_{\mathbb{D}}\left\{ f_{A}^{\mathbb{D}%
}(x_{1}),f_{A}^{\mathbb{D}}(x_{2})\right\} . 
\]

Conversely, if $A$ is convex in the sense of Definition \ref{DC2} and $%
\alpha =f_{A}^{\mathbb{D}}(x_{1}),$ then $\Gamma _{\alpha }$ may be regarded
as the set of points $x_{2}$ for which $f_{A}^{\mathbb{D}}(x_{2})\succeq
f_{A}^{\mathbb{D}}(x_{1}).$ In virtue of (\ref{DCR2}), every point of the
form $\lambda x_{1}+(1-\lambda )x_{2},\lambda \in \lbrack 0,1]_{\mathbb{D}},$
is also in $\Gamma _{\alpha }$ and hence $\Gamma _{\alpha }$ is convex set.
\end{remark}

\begin{theorem}
If two $\mathbb{D-}$fuzzy sets $A$ and $B$ are convex then $A\cap B$ is also
convex.
\end{theorem}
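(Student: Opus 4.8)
The plan is to work with the level-set characterisation of convexity in Definition~\ref{DC1} rather than the pointwise inequality of Definition~\ref{DC2}, since the former reduces the statement to the classical fact that an intersection of (ordinary) convex subsets of $X$ is convex. Write $C=A\cap B$, so that $f_C^{\mathbb{D}}(x)=\min_{\mathbb{D}}\{f_A^{\mathbb{D}}(x),f_B^{\mathbb{D}}(x)\}$, and for each $\alpha\in(0,1]_{\mathbb{D}}$ consider the three level sets $\Gamma_\alpha(A)$, $\Gamma_\alpha(B)$ and $\Gamma_\alpha(C)$ defined as in Definition~\ref{DC1}.

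The first step is to prove the set identity
\[
\Gamma_\alpha(C)=\Gamma_\alpha(A)\cap\Gamma_\alpha(B)\qquad\text{for every }\alpha\in(0,1]_{\mathbb{D}}.
\]
This rests on the observation that, in the idempotent representation, the order $\preceq$ is comparison in both $\mathbf{e}_1$ and $\mathbf{e}_2$ slots simultaneously and $\min_{\mathbb{D}}$ acts componentwise; consequently $\min_{\mathbb{D}}\{f_A^{\mathbb{D}}(x),f_B^{\mathbb{D}}(x)\}\succeq\alpha$ holds if and only if $f_A^{\mathbb{D}}(x)\succeq\alpha$ and $f_B^{\mathbb{D}}(x)\succeq\alpha$ both hold. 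I would verify this equivalence by writing $\alpha=\alpha_1\mathbf{e}_1+\alpha_2\mathbf{e}_2$ together with the two membership values in idempotent form and reading off the two real inequalities in each slot. Membership of $x$ in $\Gamma_\alpha(C)$ is then literally the conjunction of membership in $\Gamma_\alpha(A)$ and in $\Gamma_\alpha(B)$, which is exactly the claimed identity.

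The second step is routine: because $A$ and $B$ are convex, Definition~\ref{DC1} gives that $\Gamma_\alpha(A)$ and $\Gamma_\alpha(B)$ are convex subsets of $X$ for every $\alpha\in(0,1]_{\mathbb{D}}$, and the intersection of two convex sets is again convex. Hence $\Gamma_\alpha(C)=\Gamma_\alpha(A)\cap\Gamma_\alpha(B)$ is convex for all such $\alpha$, and Definition~\ref{DC1} then yields that $C=A\cap B$ is convex, completing the argument.

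The only delicate point, and the step I expect to require the most care, is the level-set identity of the first step, because $\preceq$ is merely a partial order on $\mathbb{D}$ and $\min_{\mathbb{D}}$ was defined only for comparable pairs. This is harmless here provided one reads $\min_{\mathbb{D}}$ as the componentwise minimum coming from the idempotent decomposition, equivalently one assumes, as the very definition of $A\cap B$ tacitly does, that $f_A^{\mathbb{D}}(x)$ and $f_B^{\mathbb{D}}(x)$ are comparable for each $x$. An alternative route avoiding level sets altogether is to use Definition~\ref{DC2} directly: applying (\ref{DCR2}) to $A$ and to $B$ at $z=\lambda x_1+(1-\lambda)x_2$ and then taking $\min_{\mathbb{D}}$ of the two resulting inequalities, the conclusion $f_C^{\mathbb{D}}(z)\succeq\min_{\mathbb{D}}\{f_C^{\mathbb{D}}(x_1),f_C^{\mathbb{D}}(x_2)\}$ follows from the commutativity and associativity of the componentwise minimum; this again hinges on the same componentwise reading of $\min_{\mathbb{D}}$.
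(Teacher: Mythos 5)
Your proposal is correct, but its main argument takes a genuinely different route from the paper's. The paper works entirely inside the pointwise characterisation (Definition \ref{DC2}): writing $C=A\cap B$, it applies (\ref{DCR2}) to $A$ and to $B$ at the point $\lambda x_{1}+(1-\lambda )x_{2}$ and then rearranges the resulting nested minima,
\[
\min\nolimits_{\mathbb{D}}\left\{ \min\nolimits_{\mathbb{D}}\left\{ f_{A}^{
\mathbb{D}}(x_{1}),f_{A}^{\mathbb{D}}(x_{2})\right\} ,\min\nolimits_{\mathbb{
D}}\left\{ f_{B}^{\mathbb{D}}(x_{1}),f_{B}^{\mathbb{D}}(x_{2})\right\}
\right\} =\min\nolimits_{\mathbb{D}}\left\{ f_{C}^{\mathbb{D}
}(x_{1}),f_{C}^{\mathbb{D}}(x_{2})\right\} ,
\]
which is precisely the alternative you sketch in your closing paragraph. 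Your primary argument instead uses the level-set characterisation (Definition \ref{DC1}): the identity $\Gamma _{\alpha }(C)=\Gamma _{\alpha }(A)\cap \Gamma _{\alpha }(B)$ reduces the theorem to the classical fact that an intersection of convex sets is convex. Both arguments are sound, and both ultimately hinge on the same componentwise reading of $\min\nolimits_{\mathbb{D}}$ in the idempotent decomposition --- a point you flag explicitly, whereas the paper leaves it implicit (its rearrangement of nested minima needs it just as much as your level-set identity does). What your route buys is the reduction to a standard convexity fact and a more careful treatment of the partial-order subtlety; what it costs is a dependence on the equivalence of the two definitions of convexity (the Remark following Definition \ref{DC2}) whenever the hypothesis on $A$ and $B$, or the conclusion for $C$, is read in the sense of Definition \ref{DC2}. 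Since the paper itself asserts that equivalence, this dependence is not a gap.
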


\begin{proof}
Let $C=A\cap B$. Then 
\[
f_{C}^{\mathbb{D}}\{\lambda x_{1}+(1-\lambda )x_{2}\}=\min\nolimits_{\mathbb{%
D}}\{f_{A}^{\mathbb{D}}\{\lambda x_{1}+(1-\lambda )x_{2}\},f_{B}^{\mathbb{D}%
}\{\lambda x_{1}+(1-\lambda )x_{2}\}\}. 
\]

Now since $A$ and $B$ are convex%
\[
f_{A}^{\mathbb{D}}\{\lambda x_{1}+(1-\lambda )x_{2}\}\succeq \min\nolimits_{%
\mathbb{D}}\left\{ f_{A}^{\mathbb{D}}(x_{1}),f_{A}^{\mathbb{D}%
}(x_{2})\right\} 
\]%
\[
f_{B}^{\mathbb{D}}\{\lambda x_{1}+(1-\lambda )x_{2}\}\succeq \min\nolimits_{%
\mathbb{D}}\left\{ f_{B}^{\mathbb{D}}(x_{1}),f_{B}^{\mathbb{D}%
}(x_{2})\right\} 
\]

and hence%
\begin{eqnarray*}
&&f_{C}^{\mathbb{D}}\{\lambda x_{1}+(1-\lambda )x_{2}\} \\
&\succeq &\min\nolimits_{\mathbb{D}}\{\min\nolimits_{\mathbb{D}}\left\{
f_{A}^{\mathbb{D}}(x_{1}),f_{A}^{\mathbb{D}}(x_{2})\right\} ,\min\nolimits_{%
\mathbb{D}}\left\{ f_{B}^{\mathbb{D}}(x_{1}),f_{B}^{\mathbb{D}%
}(x_{2})\right\} \}
\end{eqnarray*}

or equivalently%
\begin{eqnarray*}
&&f_{C}^{\mathbb{D}}\{\lambda x_{1}+(1-\lambda )x_{2}\} \\
&\succeq &\min\nolimits_{\mathbb{D}}\{\min\nolimits_{\mathbb{D}}\left\{
f_{A}^{\mathbb{D}}(x_{1}),f_{B}^{\mathbb{D}}(x_{1})\right\} ,\min\nolimits_{%
\mathbb{D}}\left\{ f_{A}^{\mathbb{D}}(x_{2}),f_{B}^{\mathbb{D}%
}(x_{2})\right\} \}
\end{eqnarray*}

and thus%
\[
f_{C}^{\mathbb{D}}\{\lambda x_{1}+(1-\lambda )x_{2}\}\succeq \min\nolimits_{%
\mathbb{D}}\left\{ f_{C}^{\mathbb{D}}(x_{1}),f_{C}^{\mathbb{D}%
}(x_{2})\right\} . 
\]
\end{proof}

\begin{theorem}
If two $\mathbb{D-}$fuzzy sets $A$ and $B$ on the Universal sets $X$ and $Y$
are convex then their cartesian product $A\times B$ is also convex on $%
X\times Y$.
\end{theorem}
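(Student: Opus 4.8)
The plan is to use the second characterization of convexity (Definition \ref{DC2}) rather than the level-set definition, since the $\mathbb{D}$-membership function of a cartesian product is given explicitly as a minimum, and the previous theorem shows that the class of convex $\mathbb{D}$-fuzzy sets behaves well under $\min\nolimits_{\mathbb{D}}$. So I would take an arbitrary pair of points $(x_{1},y_{1}),(x_{2},y_{2})\in X\times Y$ and an arbitrary $\lambda\in[0,1]_{\mathbb{D}}$, and try to verify
\[
f_{A\times B}^{\mathbb{D}}\{\lambda(x_{1},y_{1})+(1-\lambda)(x_{2},y_{2})\}\succeq\min\nolimits_{\mathbb{D}}\{f_{A\times B}^{\mathbb{D}}((x_{1},y_{1})),f_{A\times B}^{\mathbb{D}}((x_{2},y_{2}))\}.
\]

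First I would expand the left-hand side using the definition of the cartesian-product membership function and the componentwise linear structure, namely $\lambda(x_{1},y_{1})+(1-\lambda)(x_{2},y_{2})=(\lambda x_{1}+(1-\lambda)x_{2},\,\lambda y_{1}+(1-\lambda)y_{2})$, so that the left-hand side equals $\min\nolimits_{\mathbb{D}}\{f_{A}^{\mathbb{D}}(\lambda x_{1}+(1-\lambda)x_{2}),\,f_{B}^{\mathbb{D}}(\lambda y_{1}+(1-\lambda)y_{2})\}$. Then I would apply the convexity of $A$ and of $B$ (in the sense of Definition \ref{DC2}) to each argument separately, obtaining $f_{A}^{\mathbb{D}}(\lambda x_{1}+(1-\lambda)x_{2})\succeq\min\nolimits_{\mathbb{D}}\{f_{A}^{\mathbb{D}}(x_{1}),f_{A}^{\mathbb{D}}(x_{2})\}$ and the analogous inequality for $B$. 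Since the relation $\preceq$ is defined componentwise in the idempotent representation, $\min\nolimits_{\mathbb{D}}$ is monotone in each of its two arguments; combining the two inequalities then gives that the left-hand side dominates $\min\nolimits_{\mathbb{D}}\{\min\nolimits_{\mathbb{D}}\{f_{A}^{\mathbb{D}}(x_{1}),f_{A}^{\mathbb{D}}(x_{2})\},\min\nolimits_{\mathbb{D}}\{f_{B}^{\mathbb{D}}(y_{1}),f_{B}^{\mathbb{D}}(y_{2})\}\}$. Finally I would reassociate and regroup this nested minimum exactly as in the proof of the preceding theorem, rewriting it as $\min\nolimits_{\mathbb{D}}\{\min\nolimits_{\mathbb{D}}\{f_{A}^{\mathbb{D}}(x_{1}),f_{B}^{\mathbb{D}}(y_{1})\},\min\nolimits_{\mathbb{D}}\{f_{A}^{\mathbb{D}}(x_{2}),f_{B}^{\mathbb{D}}(y_{2})\}\}$, and recognize the two inner minima as $f_{A\times B}^{\mathbb{D}}((x_{1},y_{1}))$ and $f_{A\times B}^{\mathbb{D}}((x_{2},y_{2}))$, which yields the required inequality and hence convexity of $A\times B$.

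The main subtlety, and the only point that needs genuine care, is the manipulation of $\min\nolimits_{\mathbb{D}}$: unlike the real case, $\max\nolimits_{\mathbb{D}}$ and $\min\nolimits_{\mathbb{D}}$ are only defined when the relevant hyperbolic numbers are comparable, and the partial order $\preceq$ is not total. I would therefore justify the monotonicity of $\min\nolimits_{\mathbb{D}}$ and the reassociation of the fourfold minimum by passing to the idempotent representation $\zeta=\zeta_{1}\mathbf{e}_{1}+\zeta_{2}\mathbf{e}_{2}$, where $\preceq$ reduces to the coordinatewise real order and $\min\nolimits_{\mathbb{D}}$ acts componentwise as the ordinary real minimum; all the needed commutativity, associativity and monotonicity properties then follow from the corresponding elementary facts about $\min$ on $\mathbb{R}$ applied in each component. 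This componentwise reduction is the same device implicitly used in the proof of the previous theorem, so the argument is structurally parallel and requires no new machinery.
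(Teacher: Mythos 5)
Your proposal is correct and follows essentially the same route as the paper's own proof: expand $f_{A\times B}^{\mathbb{D}}$ at the convex combination of $(x_{1},y_{1})$ and $(x_{2},y_{2})$, apply Definition \ref{DC2} to each factor separately, and then regroup the resulting fourfold $\min\nolimits_{\mathbb{D}}$ to recognize $f_{A\times B}^{\mathbb{D}}((x_{1},y_{1}))$ and $f_{A\times B}^{\mathbb{D}}((x_{2},y_{2}))$. Your explicit justification of the monotonicity and reassociation of $\min\nolimits_{\mathbb{D}}$ via the idempotent componentwise representation addresses a comparability subtlety that the paper leaves implicit, but it does not alter the structure of the argument.
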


\begin{proof}
Let $C=A\times B.$ Then%
\begin{eqnarray*}
f_{C}^{\mathbb{D}}\{\lambda (x_{1},y_{1})+(1-\lambda )(x_{2},y_{2})\}
&=&f_{C}^{\mathbb{D}}\{(\lambda x_{1}+(1-\lambda )x_{2},\lambda
y_{1}+(1-\lambda )y_{2})\} \\
&=&\min\nolimits_{\mathbb{D}}\{f_{A}^{\mathbb{D}}(\lambda x_{1}+(1-\lambda
)x_{2}),f_{B}^{\mathbb{D}}(\lambda y_{1}+(1-\lambda )y_{2})\}
\end{eqnarray*}

Now since $A$ and $B$ are convex%
\[
f_{A}^{\mathbb{D}}\{\lambda x_{1}+(1-\lambda )x_{2}\}\succeq \min\nolimits_{%
\mathbb{D}}\left\{ f_{A}^{\mathbb{D}}(x_{1}),f_{A}^{\mathbb{D}%
}(x_{2})\right\} 
\]%
\[
f_{B}^{\mathbb{D}}\{\lambda y_{1}+(1-\lambda )y_{2}\}\succeq \min\nolimits_{%
\mathbb{D}}\left\{ f_{B}^{\mathbb{D}}(y_{1}),f_{B}^{\mathbb{D}%
}(y_{2})\right\} . 
\]

Hence%
\begin{eqnarray*}
&&f_{C}^{\mathbb{D}}\{\lambda (x_{1},y_{1})+(1-\lambda )(x_{2},y_{2})\} \\
&\succeq &\min\nolimits_{\mathbb{D}}\{\min\nolimits_{\mathbb{D}}\left\{
f_{A}^{\mathbb{D}}(x_{1}),f_{A}^{\mathbb{D}}(x_{2})\right\} ,\min\nolimits_{%
\mathbb{D}}\left\{ f_{B}^{\mathbb{D}}(y_{1}),f_{B}^{\mathbb{D}%
}(y_{2})\right\} \} \\
&=&\min\nolimits_{\mathbb{D}}\{\min\nolimits_{\mathbb{D}}\left\{ f_{A}^{%
\mathbb{D}}(x_{1}),f_{B}^{\mathbb{D}}(y_{1})\right\} ,\min\nolimits_{\mathbb{%
D}}\left\{ f_{A}^{\mathbb{D}}(x_{2}),f_{B}^{\mathbb{D}}(y_{2})\right\} \}.
\end{eqnarray*}

Therefore%
\[
f_{C}^{\mathbb{D}}\{\lambda (x_{1},y_{1})+(1-\lambda )(x_{2},y_{2})\}\succeq
\min\nolimits_{\mathbb{D}}\{f_{C}^{\mathbb{D}}((x_{1},y_{1})),f_{C}^{\mathbb{%
D}}((x_{2},y_{2}))\}. 
\]
\end{proof}

\begin{remark}
\label{R1} A cylinder $P$ in $X\times Y$ is a convex $\mathbb{D-}$fuzzy set
since it is the cartesian product of two convex set.
\end{remark}

\begin{definition}
\label{Db} A $\mathbb{D-}$fuzzy set $A$ is bounded if and only if the sets $%
\Gamma _{\alpha }=\{x:f_{A}^{\mathbb{D}}(x)\succeq \alpha \}$ are $\mathbb{D-%
}$bounded for all $\alpha \succ 0;$ that is for every $\alpha \succ 0$ there
exists a finite $R(\alpha )$ such that $\left\Vert x\right\Vert _{\mathbb{D}%
}\preceq R(\alpha )$ for all $x$ in $\Gamma _{\alpha }.$
\end{definition}

Let $A$ be a $\mathbb{D-}$bounded set and for $\epsilon \succ 0$ consider $%
\Gamma _{\epsilon }=\{x:f_{A}^{\mathbb{D}}(x)\succeq \epsilon \}.$ Then by
the definition \ref{Db}, $\Gamma _{\epsilon }$ is contained in a sphere $S$
of radius $R(\epsilon ).$ Let $H$ be a supporting hyperplane of $S.$ Then,
all points on the side of $H$ which does not contain the origin lie outside
or on $S,$ and so for all such points $f_{A}^{\mathbb{D}}(x)\preceq \epsilon
.$ Hence we can say that, if $A$ is $\mathbb{D-}$bounded set, then for each $%
\epsilon \succ 0$ there exists a hyperplane $H$ such that $f_{A}^{\mathbb{D}%
}(x)\preceq \epsilon $ for all $x$ on the side of $H$ which does not contain
the origin.

\begin{theorem}
Let $A$ be a $\mathbb{D-}$bounded $\mathbb{D-}$fuzzy set and $M=\{\sup_{%
\mathbb{D}}f_{A}^{\mathbb{D}}(x):x\in X\}.$ Then there is atleast one point $%
x_{0}$ at which $M$ is essentially attained in the sense that, for each $%
\epsilon \succ 0,$ every spherical neighborhood of $x_{0}$ contains points
in the set $Q(\epsilon )=\{x:f_{A}^{\mathbb{D}}(x)\succeq M-\epsilon \}.$
\end{theorem}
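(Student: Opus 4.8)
The plan is to reproduce, in the hyperbolic setting, the classical Bolzano--Weierstrass argument, reading everything through the idempotent decomposition. First I would write $f_A^{\mathbb D}(x)=f_A^1(x)\mathbf e_1+f_A^2(x)\mathbf e_2$ and observe that, since the order $\preceq$ is componentwise, the supremum is forced to be $M=M_1\mathbf e_1+M_2\mathbf e_2$ with $M_1=\sup_x f_A^1(x)$ and $M_2=\sup_x f_A^2(x)$, both finite since the values lie in $[0,1]_{\mathbb D}$. I may assume $M\succ 0$, the case where $M$ is $\mathbf 0$ or a zero divisor being degenerate.

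Next I would fix a sequence $\epsilon_n\succ 0$ decreasing to $0$, say $\epsilon_n=\frac1n\mathbf e_1+\frac1n\mathbf e_2$, and set $Q_n=Q(\epsilon_n)=\{x:f_A^{\mathbb D}(x)\succeq M-\epsilon_n\}$. Since $\epsilon_{n+1}\preceq\epsilon_n$ forces $M-\epsilon_{n+1}\succeq M-\epsilon_n$, these sets are nested, $Q_1\supseteq Q_2\supseteq\cdots$. Fix an index $N$ large enough that $M-\epsilon_N\succ 0$ (possible because $M\succ 0$); then for $n\geq N$ the set $Q_n$ is exactly the superlevel set $\Gamma_{M-\epsilon_n}$, which is $\mathbb D$-bounded by Definition~\ref{Db}. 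The key step, discussed below, is that each $Q_n$ is nonempty; granting this, choose $x_n\in Q_n$ for $n\geq N$. All these points lie in the single bounded set $Q_N=\Gamma_{M-\epsilon_N}$, so $\left\Vert x_n\right\Vert_{\mathbb D}\preceq R(M-\epsilon_N)$. Reading this $\mathbb D$-norm bound in its two real components shows the sequence is bounded in the ordinary Euclidean sense (identifying the ambient finite-dimensional $\mathbb D$-module with a real Euclidean space), and the Bolzano--Weierstrass theorem yields a subsequence $x_{n_k}\to x_0$. Finally I would check that $x_0$ is as required: given any $\epsilon\succ0$ and any spherical neighborhood $U$ of $x_0$, convergence gives $x_{n_k}\in U$ for all large $k$, while $\epsilon_{n_k}\to0$ gives $\epsilon_{n_k}\preceq\epsilon$, so $M-\epsilon_{n_k}\succeq M-\epsilon$ and $x_{n_k}\in Q_{n_k}\subseteq Q(\epsilon)$ for all large $k$. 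Hence $U\cap Q(\epsilon)\neq\emptyset$, which is exactly the assertion that $M$ is essentially attained at $x_0$.

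The main obstacle is the nonemptiness of the sets $Q_n$, and this is where the hyperbolic case genuinely departs from the classical one. In the real-valued theorem nonemptiness is immediate from the definition of the supremum, but here $M_1$ and $M_2$ are two separate real suprema that need not be approached at one common point. What saves the argument is the convention, already used for $\max_{\mathbb D}$ and $\min_{\mathbb D}$, that $\sup_{\mathbb D}f_A^{\mathbb D}$ is only defined when the values $V=\{f_A^{\mathbb D}(x):x\in X\}$ are pairwise comparable, so $V$ is a chain under $\preceq$. From this I would deduce nonemptiness directly: writing $\epsilon=\epsilon^{(1)}\mathbf e_1+\epsilon^{(2)}\mathbf e_2$, pick $v\in V$ with first component above $M_1-\epsilon^{(1)}$ and $w\in V$ with second component above $M_2-\epsilon^{(2)}$; since $v$ and $w$ are comparable, the larger of the two exceeds $M-\epsilon$ in both components and therefore belongs to $Q(\epsilon)$. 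I would state this chain argument explicitly, since without the comparability hypothesis the conclusion can fail: two points carrying the values $\mathbf e_1$ and $\mathbf e_2$ give $M=1$ but leave every small $Q(\epsilon)$ empty.
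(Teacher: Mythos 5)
Your proof is correct, and its skeleton is the same as the paper's: a nested sequence of superlevel sets, a point chosen in each, Bolzano--Weierstrass, and a check that the resulting limit point essentially attains $M$ (your cosmetic differences --- $\epsilon_n=1/n$ in place of the paper's $M/(n+1)$, and a convergent subsequence in place of a limit point --- change nothing). The genuine divergence is the step you single out as the main obstacle. The paper, transcribing Zadeh's real-valued argument, simply asserts ``Since $M=\sup_{\mathbb{D}}f_{A}^{\mathbb{D}}(x)$, $\Gamma_{n}$ is nonempty for all finite $n$,'' which is immediate for a real supremum but not in the partial order $\preceq$: the componentwise supremum $M=M_{1}\mathbf{e}_{1}+M_{2}\mathbf{e}_{2}$ need not be approached at any single point, and your two-point example with values $\mathbf{e}_{1}$ and $\mathbf{e}_{2}$ shows that under the componentwise reading of $\sup_{\mathbb{D}}$ the nonemptiness claim, and indeed the theorem itself, fails. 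Your chain argument --- extend to $\sup_{\mathbb{D}}$ the comparability convention the paper imposes on $\max_{\mathbb{D}}$ and $\min_{\mathbb{D}}$, pick $v$ nearly maximal in the first component and $w$ nearly maximal in the second, and observe that the larger of the two comparable values lands in $Q(\epsilon)$ --- is precisely the lemma the paper is missing, and it makes explicit the hypothesis under which the statement is true. Two smaller points also favor your version: you never need the paper's unjustified claim that $\Gamma_{1}$ is closed (the paper places the limit point $x_{0}$ inside $\Gamma_{1}$, which requires closedness, but the conclusion does not use this), and you at least flag the degenerate case where $M$ has a vanishing component, a case the paper silently shares, since Definition \ref{Db} yields boundedness of $\Gamma_{\alpha}$ only for $\alpha \succ 0$ and for such $M$ the levels $M-\epsilon_{n}$ are never $\succ 0$. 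In short: same route, but your proof closes a real gap that the paper inherits by copying the classical argument verbatim.
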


\begin{proof}
Consider a nested sequence of $\mathbb{D-}$bounded sets $\Gamma _{1},\Gamma
_{2},...,$ where 
\[
\Gamma _{n}=\{x:f_{A}^{\mathbb{D}}(x)\succeq M-M/(n+1)\},n=1,2,... 
\]

Since $M=\{\sup_{\mathbb{D}}f_{A}^{\mathbb{D}}(x):x\in X\},$ $\Gamma _{n}$
is nonempty for all finite $n.$

Let $x_{n}$ be an arbitrarily chosen point in $\Gamma _{n},n=1,2,....$ Then $%
x_{1},x_{2},...,$ is a sequence of points in a closed $\mathbb{D-}$bounded
set $\Gamma _{1}.$ By the Bolzano-Weierstrass theorem, thus sequence must
have atleast one limit point, say $x_{0},$ in $\Gamma _{1}.$ Consequently,
every spherical neighborhood of $x_{0}$ will contain infinitely many points
from the sequence $x_{1},x_{2},...,$ and more particularly, from the
subsequence $x_{N+1},$ $x_{N+2},$ $...,$ where $N\geq M/\epsilon .$ Since
the points of this subsequence fall within the set $Q(\epsilon )=\{x:f_{A}^{%
\mathbb{D}}(x)\succeq M-\epsilon \},$ the theorem is proved.
\end{proof}

A $\mathbb{D-}$fuzzy set $A$ is said to be strictly convex if the sets $%
\Gamma _{\alpha },$ $0\prec \alpha \preceq 1$ are strictly convex. A $%
\mathbb{D-}$fuzzy set $A$ is said to be strongly convex if, for any two
distinct points $x_{1}$ and $x_{2}$, and any $\lambda \in (0,1)_{\mathbb{D}}$%
\[
f_{A}^{\mathbb{D}}\{\lambda x_{1}+(1-\lambda )x_{2}\}\succ \min\nolimits_{%
\mathbb{D}}\{f_{A}^{\mathbb{D}}(x_{1}),f_{A}^{\mathbb{D}}(x_{2})\}. 
\]

Note that strong convexity does not imply strict convexity or vice-versa. If 
$A$ and $B$ are strictly (strongly) convex, their intersection is strictly
(strongly) convex.

Let $A$ be a convex $\mathbb{D-}$fuzzy set and $M=\sup_{\mathbb{D}}f_{A}^{%
\mathbb{D}}(x).$ If $A$ is $\mathbb{D-}$bounded, then either $M$ is attained
for some $x,$ say $x_{0},$ or there is atleast one point $x_{0}$ at which $M$
is essentially attained in the sense that, for each $\epsilon \succ 0,$
every spherical neighborhood of $x_{0}$ contains points in the set $%
Q(\epsilon )=\{x:M-f_{A}^{\mathbb{D}}(x)\preceq \epsilon \}.$ Note that if $%
A $ is strongly convex and $x_{0}$ is attained, then $x_{0}$ is unique.

Let $A$ be a $\mathbb{D-}$fuzzy set with $\mathbb{D}$-membership function $%
f_{A}^{\mathbb{D}}(x),$ $x\in X$ and $M=\sup_{\mathbb{D}}f_{A}^{\mathbb{D}%
}(x).$ Then the set of all points in $X$ at which $M$ is essentially
attained, is called the core of $A$ and is denoted by $C(A).$

\begin{theorem}
If $A$ is a convex $\mathbb{D-}$fuzzy set, then $C(A)$ is convex.
\end{theorem}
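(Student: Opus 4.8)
The plan is to follow the classical geometric argument for convexity of the core, adapting it to the hyperbolic order $\preceq$ and the $\mathbb{D}$-valued norm. Recall that $C(A)$ consists of those $x_{0}$ at which $M=\sup_{\mathbb{D}}f_{A}^{\mathbb{D}}$ is essentially attained, i.e. for every $\epsilon \succ 0$ each spherical neighborhood of $x_{0}$ meets $Q(\epsilon)=\{x:M-f_{A}^{\mathbb{D}}(x)\preceq \epsilon\}$. So I would fix two points $x_{1},x_{2}\in C(A)$, a scalar $\lambda \in [0,1]_{\mathbb{D}}$, and set $x_{0}=\lambda x_{1}+(1-\lambda)x_{2}$; the goal is $x_{0}\in C(A)$. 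Concretely I must show that, given any $\epsilon \succ 0$ and any spherical neighborhood $N$ of $x_{0}$ of radius $r\succ 0$, the intersection $N\cap Q(\epsilon)$ is nonempty.

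First I would exploit that $x_{1},x_{2}\in C(A)$: the ball of radius $r$ about $x_{1}$ contains a point $y_{1}$ with $f_{A}^{\mathbb{D}}(y_{1})\succeq M-\epsilon$, and likewise a point $y_{2}$ near $x_{2}$ with $f_{A}^{\mathbb{D}}(y_{2})\succeq M-\epsilon$. Then I would form $z=\lambda y_{1}+(1-\lambda)y_{2}$ and invoke the convexity of $A$ (Definition \ref{DC2}) to get
\[
f_{A}^{\mathbb{D}}(z)\succeq \min\nolimits_{\mathbb{D}}\{f_{A}^{\mathbb{D}}(y_{1}),f_{A}^{\mathbb{D}}(y_{2})\}\succeq M-\epsilon,
\]
so that $z\in Q(\epsilon)$. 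To place $z$ inside $N$ I would use properties $(ii)$ and $(iii)$ of the $\mathbb{D}$-norm together with the fact that $|\lambda|_{k}=\lambda$ and $|1-\lambda|_{k}=1-\lambda$ for $\lambda \in [0,1]_{\mathbb{D}}$ (immediate from passing to the idempotent components, where each coordinate lies in $[0,1]$), giving
\[
\left\Vert z-x_{0}\right\Vert_{\mathbb{D}}=\left\Vert \lambda(y_{1}-x_{1})+(1-\lambda)(y_{2}-x_{2})\right\Vert_{\mathbb{D}}\preceq \lambda\left\Vert y_{1}-x_{1}\right\Vert_{\mathbb{D}}+(1-\lambda)\left\Vert y_{2}-x_{2}\right\Vert_{\mathbb{D}}\preceq \lambda r+(1-\lambda)r=r.
\]
Hence $z\in N\cap Q(\epsilon)$; as $\epsilon$ and $N$ were arbitrary, every spherical neighborhood of $x_{0}$ meets $Q(\epsilon)$, so $x_{0}\in C(A)$ and $C(A)$ is convex.

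The step I expect to be the main obstacle is the application of Definition \ref{DC2}, because $\min\nolimits_{\mathbb{D}}\{f_{A}^{\mathbb{D}}(y_{1}),f_{A}^{\mathbb{D}}(y_{2})\}$ is only defined when $f_{A}^{\mathbb{D}}(y_{1})$ and $f_{A}^{\mathbb{D}}(y_{2})$ are comparable, which need not hold a priori. To bypass this I would split $f_{A}^{\mathbb{D}}=f_{A}^{1}\mathbf{e}_{1}+f_{A}^{2}\mathbf{e}_{2}$ into idempotent components: since $\preceq$ is exactly coordinatewise $\leq$, $\mathbb{D}$-convexity of $A$ forces each ordinary fuzzy set $A_{1},A_{2}$ to obey the scalar convexity inequality, and writing $M=M_{1}\mathbf{e}_{1}+M_{2}\mathbf{e}_{2}$, $\epsilon=\epsilon_{1}\mathbf{e}_{1}+\epsilon_{2}\mathbf{e}_{2}$ one runs the estimate $f_{A}^{i}(z)\geq \min\{f_{A}^{i}(y_{1}),f_{A}^{i}(y_{2})\}\geq M_{i}-\epsilon_{i}$ in each component separately, then recombines to recover $f_{A}^{\mathbb{D}}(z)\succeq M-\epsilon$ without ever needing comparability. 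A secondary point to check is the radius bookkeeping for a $\mathbb{D}$-valued radius: choosing both auxiliary balls of the common radius $r$ genuinely yields $\left\Vert z-x_{0}\right\Vert_{\mathbb{D}}\preceq r$, which is exactly the displayed inequality and works precisely because the coefficients $\lambda$ and $1-\lambda$ sum to $1$ in $\mathbb{D}$.
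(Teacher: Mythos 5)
Your proof is correct, and it is essentially the rigorous kernel of the paper's own argument, which transcribes Zadeh's classical proof into the hyperbolic setting. Both proofs do the same thing: pick points $y_{1},y_{2}$ of membership $\succeq M-\epsilon$ near the two core points, apply convexity of $A$ along the segment joining $y_{1}$ and $y_{2}$, and argue that this segment meets the prescribed neighborhood of the intermediate point. The difference lies in how that last step is established. The paper does it geometrically: it introduces a cylinder $P$ of radius $\epsilon_{\mathbb{D}}$ about the axis through its core points (via Remark \ref{R1}) and then asserts $d_{\mathbb{D}}(x,y)\preceq \epsilon_{\mathbb{D}}$ for \emph{every} $y$ on the segment $x_{0}^{\prime}x_{1}^{\prime}$ (in its notation) --- which is false as literally written, since a point near $x_{1}^{\prime}$ need not be close to a point $x$ near $x_{0}$; what is true, and what the paper then actually uses, is that \emph{some} point of that segment lies within $\epsilon_{\mathbb{D}}$ of $x$. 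You exhibit that witness explicitly, $z=\lambda y_{1}+(1-\lambda)y_{2}$, and bound $\left\Vert z-x_{0}\right\Vert_{\mathbb{D}}$ (your $x_{0}$ being the convex combination) by homogeneity and the triangle inequality for $\left\Vert \cdot\right\Vert_{\mathbb{D}}$. This is cleaner, dispenses with the cylinder and the metric altogether, and also keeps the tolerance $\epsilon$ and the neighborhood radius $r$ as independent parameters, exactly as the definition of essential attainment demands, whereas the paper conflates both into the single $\epsilon_{\mathbb{D}}$.

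One caveat concerns your comparability patch. Deducing componentwise convexity from Definition \ref{DC2} is itself shaky: under the paper's conventions, $\min\nolimits_{\mathbb{D}}$ is defined only for comparable pairs, so Definition \ref{DC2} asserts nothing about points whose membership values are incomparable, and your claim that it ``forces'' the scalar inequality in each component is circular at that point. The clean repair, entirely inside the paper's framework, is to use Definition \ref{DC1} instead: since $f_{A}^{\mathbb{D}}(y_{1})\succeq M-\epsilon$ and $f_{A}^{\mathbb{D}}(y_{2})\succeq M-\epsilon$, both points lie in $\Gamma_{M-\epsilon}$, which is convex by hypothesis, hence $z\in \Gamma_{M-\epsilon}$, i.e. $f_{A}^{\mathbb{D}}(z)\succeq M-\epsilon$, with no comparability assumption and no splitting into components (shrink $\epsilon$ first if necessary so that $M-\epsilon\succ 0$, which is harmless because $Q(\epsilon^{\prime})\subseteq Q(\epsilon)$ for $\epsilon^{\prime}\preceq\epsilon$). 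Note that the paper's own proof silently relies on the same step, so this is a gap in the paper's exposition that your argument, once rerouted through Definition \ref{DC1}, actually closes.
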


\begin{proof}
It is sufficient to show that if $M$ is essentially attained at $x_{0}$ and $%
x_{1},$ $x_{0}\neq x_{1},$ then it is also essentially attained at all $%
x=\lambda x_{0}+(1-\lambda )x_{1},\lambda \in \lbrack 0,1]_{\mathbb{D}}.$

Let $P$ be a cylinder os radius $\epsilon _{\mathbb{D}}$ with the passing
through $x_{0}$ and $x_{1}$ as its axis. Let $x_{0}^{\prime }$ be a point in
a sphere of radius $\epsilon _{\mathbb{D}}$ centering on $x_{0}$ and $%
x_{1}^{\prime }$ be a point in a sphere of radius $\epsilon _{\mathbb{D}}$
centering on $x_{1}$ such that $f_{A}^{\mathbb{D}}(x_{0}^{\prime })\succeq
M-\epsilon _{\mathbb{D}}$ and $f_{A}^{\mathbb{D}}(x_{1}^{\prime })\succeq
M-\epsilon _{\mathbb{D}}.$ Then, by the convexity of $A$, for any point $u$
on the segment $x_{0}^{\prime }x_{1}^{\prime },$ we have $f_{A}^{\mathbb{D}%
}(u)\succeq M-\epsilon _{\mathbb{D}}.$ Furthermore, by the remark \ref{R1},
all points on $x_{0}^{\prime }x_{1}^{\prime }$ will lie on $P.$

Now let $x$ be any point in the segment $x_{0}x_{1}.$ Then for any point $y$
in the segment $x_{0}^{\prime }x_{1}^{\prime }$, we have 
\[
d_{\mathbb{D}}(x,y)\preceq \epsilon _{\mathbb{D}},\text{ since }%
x_{0}^{\prime }x_{1}^{\prime }\in P. 
\]%
where $d_{\mathbb{D}}$ is the hyperbolic valued metric.

Hence, a sphere of radius $\epsilon _{\mathbb{D}}$ centering on $x$ will
contain at least one point of the segment $x_{0}^{\prime }x_{1}^{\prime }$
and so will contain at least one point, say $w,$ at which $f_{A}^{\mathbb{D}%
}(w)\succeq M-\epsilon _{\mathbb{D}}.$ This implies that $M$ is essentially
attained at $x$ and thus proves the theorem.
\end{proof}

\begin{corollary}
If $A$ is strongly convex, then the point at which $M$ is essentially
attained is unique.
\end{corollary}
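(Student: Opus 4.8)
The plan is to argue by contradiction, exploiting the \emph{strict} inequality in the definition of strong convexity to manufacture a point whose $\mathbb{D}$-membership strictly exceeds the supremum $M$. Suppose $M$ were essentially attained at two distinct points $x_{0}$ and $x_{1}$. First I would extract, for each $n$, a point $a_{n}$ in the sphere of radius $1/n$ about $x_{0}$ and a point $b_{n}$ in the sphere of radius $1/n$ about $x_{1}$ with $f_{A}^{\mathbb{D}}(a_{n})\succeq M-\epsilon_{n}$ and $f_{A}^{\mathbb{D}}(b_{n})\succeq M-\epsilon_{n}$, where $\epsilon_{n}\succ 0$ and $\epsilon_{n}\to 0$; this is exactly what essential attainment at $x_{0}$ and $x_{1}$ supplies. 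Since $f_{A}^{\mathbb{D}}\preceq M$ everywhere, squeezing the two idempotent coordinates between $M-\epsilon_{n}$ and $M$ forces $f_{A}^{\mathbb{D}}(a_{n})\to M$ and $f_{A}^{\mathbb{D}}(b_{n})\to M$, and for large $n$ we have $a_{n}\neq b_{n}$ because $x_{0}\neq x_{1}$.

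Second, I would feed these sequences into the strong-convexity inequality. Writing $m_{n}=\tfrac12 a_{n}+\tfrac12 b_{n}$, strong convexity gives $f_{A}^{\mathbb{D}}(m_{n})\succ\min\nolimits_{\mathbb{D}}\{f_{A}^{\mathbb{D}}(a_{n}),f_{A}^{\mathbb{D}}(b_{n})\}$, whose right-hand side tends to $M$ while $m_{n}\to z:=\tfrac12 x_{0}+\tfrac12 x_{1}$. This re-proves, consistently with the preceding theorem on convexity of $C(A)$, that $M$ is essentially attained at the midpoint $z$. The purpose of this step is to fuse the two ``peaks'' at $x_{0}$ and $x_{1}$ into a single interior region about $z$ on which membership is driven up toward $M$.

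Third -- and this is where the strictness must be made to bite -- I would upgrade this approach-to-$M$ into a genuine overshoot. Once one has two distinct points at which $M$ is \emph{actually} realised, strong convexity applied to their midpoint produces a value $\succ\min\nolimits_{\mathbb{D}}\{M,M\}=M$, which is impossible: $M=\sup\nolimits_{\mathbb{D}}f_{A}^{\mathbb{D}}$ forces $f_{A}^{\mathbb{D}}\preceq M$, and $\succ M$ together with $\preceq M$ contradicts the antisymmetry of $\preceq$ in each idempotent coordinate. The corollary would then follow, the hypothesis of two distinct essential-attainment points being untenable.

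The main obstacle is exactly the passage from \emph{essential} attainment to an \emph{actual} maximiser in the third step: the gap $f_{A}^{\mathbb{D}}(m_{n})\succ M-\epsilon_{n}$ is strict but not uniform, so it need not survive the limit $n\to\infty$, and absent an upper-semicontinuity hypothesis on $f_{A}^{\mathbb{D}}$ one cannot directly infer $f_{A}^{\mathbb{D}}(z)=M$. On a one-dimensional carrier this is evaded cleanly: a fixed interior point $c\in(x_{0},x_{1})$ lies on the chord $[a_{n},b_{n}]$ for all large $n$, so $c=\lambda_{n}a_{n}+(1-\lambda_{n})b_{n}$ with $\lambda_{n}\in(0,1)$, and strong convexity forces $f_{A}^{\mathbb{D}}(c)\succeq M-\epsilon_{n}$ for every $n$, i.e.\ $f_{A}^{\mathbb{D}}(c)=M$ at every interior $c$; two such $c$ then trigger the overshoot. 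In a general $\mathbb{D}$-module carrier the moving chords $[a_{n},b_{n}]$ need not meet a prescribed $c$, and the cylinder-and-metric device from the proof that $C(A)$ is convex only returns a witness \emph{within} $\epsilon_{\mathbb{D}}$ of $c$, hence only essential attainment again. Closing that gap -- producing a genuine maximiser, or otherwise preserving enough of the strict strong-convexity margin in the limit, perhaps by adding a semicontinuity assumption -- is the step I expect to demand the most care.
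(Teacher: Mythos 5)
Your proposal is incomplete, and you have in fact named the gap yourself: steps one and two only re-derive essential attainment at the midpoint, and step three requires two points where $M$ is \emph{actually} attained, which essential attainment does not supply in the absence of an upper-semicontinuity hypothesis on $f_{A}^{\mathbb{D}}$. What you do prove correctly is (i) the attained case: if $f_{A}^{\mathbb{D}}(x_{0})=f_{A}^{\mathbb{D}}(x_{1})=M$ with $x_{0}\neq x_{1}$, then strong convexity at the midpoint gives a value $\succ M$, contradicting $f_{A}^{\mathbb{D}}\preceq M$; and (ii) the one-dimensional case, where a fixed interior point $c$ of the segment $[x_{0},x_{1}]$ eventually lies on every chord $[a_{n},b_{n}]$, so that $f_{A}^{\mathbb{D}}(c)\succeq M-\epsilon_{n}$ for all large $n$, forcing $f_{A}^{\mathbb{D}}(c)=M$ and reducing to (i). In $E^{n}$ with $n\geq 2$ the needed collinearity fails, the strict margin $\succ$ is not preserved in the limit, and your argument does not close; so as a proof of the corollary as stated, the proposal has a genuine hole.

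You should know, however, that the paper itself offers no proof of this corollary at all: it is stated bare, immediately after the theorem that $C(A)$ is convex, and the only related argument in the text is the earlier remark that if $A$ is strongly convex and $x_{0}$ is attained, then $x_{0}$ is unique --- which is exactly your case (i). (The same is true of Zadeh's 1965 paper, from which this corollary is transcribed.) So the obstruction you describe is not a defect of your attempt relative to the paper's argument; it is a gap in the paper's own treatment, and your analysis of where the difficulty lies --- the cylinder-and-metric device in the proof that $C(A)$ is convex returns only witnesses within $\epsilon_{\mathbb{D}}$ of a point, never a genuine maximiser --- is the more careful one. The cleanest repair is the one you suggest: assume $f_{A}^{\mathbb{D}}$ is upper semicontinuous (componentwise in the idempotent representation). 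Then essential attainment at $x_{0}$ gives $f_{A}^{\mathbb{D}}(x_{0})\succeq\limsup_{n}f_{A}^{\mathbb{D}}(a_{n})\succeq M$, hence $f_{A}^{\mathbb{D}}(x_{0})=M$, and uniqueness follows from your step three. Without some such hypothesis it is not clear that the corollary, as stated, can be proved at all in dimension $n\geq 2$.
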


Let $A$ be a $\mathbb{D-}$fuzzy set in $E^{n}$ with membership function $%
f_{A}^{\mathbb{D}}(x)$ $=$ $f_{A}^{\mathbb{D}}(x_{1},$ $x_{2},$ $...,$ $%
x_{n}).$ The $\mathbb{D}$-shadow(or $\mathbb{D}$-projection) of $A$ on a
hyperplane $H=\{x:x_{1}=0\}$ is defined to be a $\mathbb{D}$-fuzzy set $%
S_{H}(A)$ in $E^{n-1}$ with $f_{S_{H}(A)}^{\mathbb{D}}(x)$ given by%
\[
f_{S_{H}(A)}^{\mathbb{D}}(x)=f_{S_{H}(A)}^{\mathbb{D}}(x_{2},...,x_{n})=\sup%
\nolimits_{x_{1}}f_{S_{H}(A)}^{\mathbb{D}}(x_{1},x_{2},...,x_{n}). 
\]

It is clear from the definition that, if $A$ is a convex $\mathbb{D-}$fuzzy
set, then its $\mathbb{D}$-shadow on any hyperplane is also a convex $%
\mathbb{D-}$fuzzy set.

\begin{theorem}
Let $A$ and $B$ be two convex $\mathbb{D-}$fuzzy sets in $E^{n}$. Then%
\[
S_{H}(A)=S_{H}(B)\text{ for all }H\Longrightarrow A=B. 
\]
\end{theorem}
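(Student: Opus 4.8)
The plan is to prove the contrapositive: assuming $A\neq B$, I will produce a single hyperplane $H$ with $S_H(A)\neq S_H(B)$. The first move is to reformulate the hypothesis. Since the fibres of the $\mathbb{D}$-shadow onto a hyperplane $H$ are exactly the lines orthogonal to $H$, and as $H$ ranges over all hyperplanes their normals range over all directions, the condition ``$S_H(A)=S_H(B)$ for all $H$'' is equivalent to ``$\sup_{\mathbb{D}}\{f_A^{\mathbb{D}}(x):x\in\ell\}=\sup_{\mathbb{D}}\{f_B^{\mathbb{D}}(x):x\in\ell\}$ for every line $\ell$ in $E^n$.'' I would then pass to the idempotent representation $f_A^{\mathbb{D}}=f_A^1\mathbf{e}_1+f_A^2\mathbf{e}_2$ and $f_B^{\mathbb{D}}=f_B^1\mathbf{e}_1+f_B^2\mathbf{e}_2$. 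Because the order $\preceq$ is the componentwise order and $\sup_{\mathbb{D}}$ is computed componentwise, both the shadow and the convexity (Definition \ref{DC2}) split across $\mathbf{e}_1,\mathbf{e}_2$: equality of $\mathbb{D}$-shadows is equivalent to equality of the two ordinary (real-valued) shadows of $A_i$ and $B_i$, and $A$ is convex iff $A_1,A_2$ are convex ordinary fuzzy sets. This reduction is what tames the partial order, because on each component the order is total, so the negation ``$f_B^i(x)\not\geq a$'' genuinely means ``$f_B^i(x)<a$.'' It therefore suffices to treat one component, i.e. the classical real-valued situation.

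Fix a component and write $g=f_A^i$, $h=f_B^i$, both convex ordinary fuzzy sets, and suppose $g(x_0)\neq h(x_0)$ at some $x_0$; after possibly interchanging $A$ and $B$ I may assume $a:=g(x_0)>h(x_0)$. The level set $K=\{x:h(x)\geq a\}$ is one of the sets $\Gamma_\alpha$ of the convex fuzzy set $B_i$, hence convex, and $x_0\notin K$. By the elementary separation theorem for a point and a disjoint convex set in $E^n$ there is a hyperplane $H'$ with $x_0$ strictly on one side and $K$ (if nonempty; the empty case is easier) on the other. Since $\dim E^n\geq 2$ I can choose a direction $n$ parallel to $H'$ and set $\ell=\{x_0+tn:t\in\mathbb{R}\}$; this line lies inside the hyperplane through $x_0$ parallel to $H'$, which is entirely on $x_0$'s side of $H'$ and therefore disjoint from $K$, so $h<a$ at every point of $\ell$. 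Taking $H$ orthogonal to $n$, the fibre over the foot $x_0^{\ast}$ of $x_0$ is precisely $\ell$, whence $\sup_\ell g\geq g(x_0)=a$ while $\sup_\ell h\leq a$; once the latter inequality is made strict we get $f_{S_H(A_i)}(x_0^{\ast})\neq f_{S_H(B_i)}(x_0^{\ast})$, the desired contradiction.

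The hard part will be upgrading the pointwise bound $h<a$ on $\ell$ to the strict inequality $\sup_\ell h<a$, since a supremum of values each below $a$ can still equal $a$ along an unbounded line. Here I would invoke $\mathbb{D}$-boundedness (Definition \ref{Db}): for every $\delta\succ 0$ the level set $\{h\geq a-\delta\}$ is bounded, so it meets $\ell$ only in a bounded, hence relatively compact, arc on which $h$ stays strictly below $a$; combined with the essential-attainment/Bolzano--Weierstrass argument established above, this yields a genuine gap $\sup_\ell h<a$. The remaining points — that convexity, $\sup_{\mathbb{D}}$, and the shadow all decompose across the idempotent basis, that the shadow of a convex $\mathbb{D}$-fuzzy set is convex, and that the ordinary separating hyperplane is available in $E^n$ — are routine. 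I expect the componentwise reduction and this strictness-of-the-supremum step to be the only places demanding real care; everything else follows the classical pattern for convex fuzzy sets.
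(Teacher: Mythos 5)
Your idempotent componentwise reduction is sound and is in fact an improvement on the paper: the paper's proof silently assumes $f_{A}^{\mathbb{D}}(x_{0})$ and $f_{B}^{\mathbb{D}}(x_{0})$ are comparable (it treats only the cases $\alpha \succ \beta$ and $\alpha \prec \beta$), whereas distinct hyperbolic numbers need not be comparable under $\preceq$; passing to the components, where the order is total, handles exactly the case the paper omits. The genuine gap is the step you yourself flag: upgrading ``$h<a$ pointwise on $\ell$'' to ``$\sup_{\ell}h<a$''. Neither of your proposed tools is available. First, $\mathbb{D}$-boundedness (Definition \ref{Db}) is not a hypothesis of this theorem, which assumes only convexity, so you may not invoke it. Second, even granting boundedness, Bolzano--Weierstrass only gives a limit point $x^{\ast}$ of a sequence $x_{n}\in \ell$ with $h(x_{n})\rightarrow a$; without upper semicontinuity of $h$ (assumed nowhere in the paper) you cannot conclude $h(x^{\ast})\geq a$, so no contradiction results. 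Concretely, on $\ell \cong \mathbb{R}$ take $h(t)=\max \{a-t,0\}$ for $t>0$ and $h(t)=0$ for $t\leq 0$: every superlevel set $\{h\geq c\}$ with $c>0$ is the bounded interval $(0,a-c]$, so $h$ is a convex and bounded fuzzy set on the line, $h<a$ everywhere, and yet $\sup_{\ell}h=a$. The nested sets $\{h\geq a-\delta \}\cap \ell =(0,\delta ]$ are bounded but have empty intersection, so no compactness argument of the kind you sketch can manufacture the required gap. Since your proof uses nothing about $h$ beyond these facts, the final step simply does not follow.

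The paper (following Zadeh) avoids this difficulty entirely by working at the \emph{lower} of the two values instead of the higher one. With $\alpha =f_{A}^{\mathbb{D}}(x_{0})\succ \beta =f_{B}^{\mathbb{D}}(x_{0})$, it takes the convex set $\Gamma _{\beta }=\{x:f_{B}^{\mathbb{D}}(x)\succ \beta \}$, which does not contain $x_{0}$, a hyperplane $F$ through $x_{0}$ supporting $\Gamma _{\beta }$, and $H$ orthogonal to $F$; the fibre through $x_{0}$ lies in $F$, on which $f_{B}^{\mathbb{D}}\preceq \beta$, so $f_{S_{H}(B)}^{\mathbb{D}}(x_{0}^{\ast })\preceq \beta \prec \alpha \preceq f_{S_{H}(A)}^{\mathbb{D}}(x_{0}^{\ast })$. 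The strict inequality is supplied by the pre-existing gap between $\beta$ and $\alpha$, not by any compactness or semicontinuity of the membership function --- which is precisely what your version lacks. The repair is to graft your (valuable) componentwise reduction onto this scheme: in the fixed component, separate $x_{0}$ from $\{h\succ h(x_{0})\}$ rather than from $\{h\geq g(x_{0})\}$, and run the rest of your argument; then $\sup_{\ell}h\leq h(x_{0})<g(x_{0})\leq \sup_{\ell}g$ with no limiting process needed. (Be aware that the supporting-hyperplane step has its own delicacy, inherited from Zadeh's original proof and present in the paper too: points of $\Gamma _{\beta }$ may lie on $F$ itself, so the claim $f_{B}^{\mathbb{D}}\preceq \beta$ on $F$ needs care; but that weakness is common to both routes, whereas the strictness-of-supremum gap is specific to yours.)
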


\begin{proof}
It is sufficient to show that if there exists a point, say $x_{0},$ such
that $f_{A}^{\mathbb{D}}(x_{0})\neq f_{B}^{\mathbb{D}}(x_{0}),$ then there
exists a hyperplane $H$ such that $f_{S_{H}(A)}^{\mathbb{D}}(x_{0}^{\ast
})\neq f_{S_{H}(B)}^{\mathbb{D}}(x_{0}^{\ast })$, where $x_{0}^{\ast }$ is
the projection of $x_{0}$ on $H.$

Suppose that $f_{A}^{\mathbb{D}}(x_{0})=\alpha \succ f_{B}^{\mathbb{D}%
}(x_{0})=\beta .$ Since $B$ is a convex $\mathbb{D-}$fuzzy set, the set $%
\Gamma _{\beta }=\{x:f_{B}^{\mathbb{D}}(x)\succ \beta \}$ is convex, and
hence there exists a hyperplane $F$ supporting $\Gamma _{\beta }$ and
passing through $x_{0}.$ Let $H$ be a hyperplane orthogonal to $F$ and $%
x_{0}^{\ast }$ be the $\mathbb{D}$-projection of $x_{0}$ on $H.$ Then, since 
$f_{B}^{\mathbb{D}}(x)\preceq \beta $ for all $x$ on $F,$ we have $%
f_{S_{H}(B)}^{\mathbb{D}}(x_{0}^{\ast })\preceq \beta .$ On the other hand $%
f_{S_{H}(A)}^{\mathbb{D}}(x_{0}^{\ast })\succeq \alpha .$ Consequently, $%
f_{S_{H}(B)}^{\mathbb{D}}(x_{0}^{\ast })\neq f_{S_{H}(A)}^{\mathbb{D}%
}(x_{0}^{\ast }).$ Similarly we can show this for the case $\alpha \prec
\beta .$
\end{proof}

Let $A$ and $B$ are two $\mathbb{D-}$bounded $\mathbb{D-}$fuzzy sets.and let 
$H$ be a hypersurface in $E^{n}\,$\ defined by an equation $h(x)=0,$ with
all points for which $h(x)\geq 0$ being on one side of $H$ and all points
for which $h(x)\leq 0$ being on the other side. Let $K_{H}$ be a number
dependent on $H$ such that $f_{A}^{\mathbb{D}}(x)\preceq K_{H}$ on one side
of $H$ and $f_{B}^{\mathbb{D}}(x)\preceq K_{H}$ on the other side. Let $%
M_{H}=\inf_{\mathbb{D}}K_{H}$ and $D=1-M_{H}.$ Then $D$ is called the degree
of separation of $A$ and $B$ by $H.$

The main problem is to find a member of the family of hypersurfaces $%
\{H_{\lambda }\},$ $\lambda \in E^{m},$ which realizes the highest possible
degree of separation.

The following theorem is called the separation theorem for convex $\mathbb{D-%
}$fuzzy sets.

\begin{theorem}
Let $A$ and $B$ be two $\mathbb{D-}$bounded $\mathbb{D-}$fuzzy sets in $%
E^{n},$ with maximal $\mathbb{D}$-grades $M_{A}$ and $M_{B}$ respectively.
Let $M$ be the maximal $\mathbb{D}$-grade for the intersection $A\cap B.$
Then $D=1-M.$
\end{theorem}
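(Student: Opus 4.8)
My reading is that $D$ denotes the highest attainable degree of separation, $D=1-\inf_{H}M_{H}$, so the theorem is equivalent to the identity $\inf_{H}M_{H}=M$, which I would establish by two opposite inequalities. Throughout I would pass to the idempotent representation $f_{A}^{\mathbb{D}}=f_{A}^{1}\mathbf{e}_{1}+f_{A}^{2}\mathbf{e}_{2}$, $f_{B}^{\mathbb{D}}=f_{B}^{1}\mathbf{e}_{1}+f_{B}^{2}\mathbf{e}_{2}$, and write $M=M^{1}\mathbf{e}_{1}+M^{2}\mathbf{e}_{2}$, since the order $\preceq$, the operations $\min\nolimits_{\mathbb{D}},\max\nolimits_{\mathbb{D}}$, and all the suprema and infima act coordinatewise; this lets each component be treated like an ordinary real-valued fuzzy set.

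For the easy inequality $\inf_{H}M_{H}\succeq M$, I would fix an arbitrary admissible hypersurface $H$ with separating value $K_{H}$, so that $f_{A}^{\mathbb{D}}\preceq K_{H}$ on one side and $f_{B}^{\mathbb{D}}\preceq K_{H}$ on the other. Every $x\in E^{n}$ lies on at least one side, and there the corresponding membership function is $\preceq K_{H}$; since $\min\nolimits_{\mathbb{D}}\{f_{A}^{\mathbb{D}}(x),f_{B}^{\mathbb{D}}(x)\}$ is $\preceq$ both $f_{A}^{\mathbb{D}}(x)$ and $f_{B}^{\mathbb{D}}(x)$, in either case $f_{A\cap B}^{\mathbb{D}}(x)\preceq K_{H}$. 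Taking the supremum over $x$ gives $M\preceq K_{H}$ for every admissible $K_{H}$, hence $M\preceq M_{H}=\inf\nolimits_{\mathbb{D}}K_{H}$, and since $H$ was arbitrary, $M\preceq\inf_{H}M_{H}$; equivalently $1-M_{H}\preceq 1-M$ for every $H$, so $D\preceq 1-M$.

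For the reverse inequality I would try to produce a hyperplane whose separating value equals $M$. The natural candidates are the super-level sets $\Gamma_{A}=\{x:f_{A}^{\mathbb{D}}(x)\succ M\}$ and $\Gamma_{B}=\{x:f_{B}^{\mathbb{D}}(x)\succ M\}$, which are convex (they are nested unions of the convex sets of Definition \ref{DC1}) and disjoint, since a common point would force $\min\nolimits_{\mathbb{D}}\{f_{A}^{\mathbb{D}},f_{B}^{\mathbb{D}}\}\succ M$, contradicting $M=\sup\nolimits_{\mathbb{D}}f_{A\cap B}^{\mathbb{D}}$. Two disjoint convex sets in $E^{n}$ admit a separating hyperplane $H$, and $\mathbb{D}$-boundedness of $A$ and $B$ keeps the super-level sets bounded, which is what one needs to make the separation strict.

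The step I expect to be the genuine obstacle is reading off $K_{H}=M$ from this separation. Because $\preceq$ is only a partial order, the complement of $\Gamma_{A}$ is not $\{f_{A}^{\mathbb{D}}\preceq M\}$: a point off $\Gamma_{A}$ need only satisfy $f_{A}^{1}\leq M^{1}$ \emph{or} $f_{A}^{2}\leq M^{2}$, not both, so a single real hyperplane separating $\Gamma_{A}$ from $\Gamma_{B}$ does not by itself bound $f_{A}^{\mathbb{D}}$ by $M$ on a whole side. The way I would secure the inequality is coordinatewise: in each component $i\in\{1,2\}$ the pair $A_{i},B_{i}$ are ordinary bounded convex fuzzy sets whose $\min$-intersection grade is $M^{i}$, and the classical real separation theorem yields a hyperplane $H_{i}$ realizing $K_{H_{i}}=M^{i}$, so $\inf_{H}M_{H}^{i}=M^{i}$. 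Reassembling, $\inf_{H}M_{H}=M^{1}\mathbf{e}_{1}+M^{2}\mathbf{e}_{2}=M$, and hence $D=1-M$, with the highest degree understood in the $\mathbb{D}$-lattice (the two idempotent coordinates may in principle be separated by different hyperplanes). I would flag this coordinatewise reading as the part needing the most care, because the direct transcription of the real argument with one hyperplane does not deliver $K_{H}=M$ on its own.
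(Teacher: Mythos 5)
Your proposal is essentially correct (under the componentwise idempotent reading of the lattice operations, which is the only coherent one), but it takes a genuinely different route from the paper: the paper gives no argument at all, merely asserting that Zadeh's classical proof \cite{Za} carries over, whereas you identify precisely the step at which it does not. Zadeh's second step uses the fact that for a \emph{total} order the complement of $\{x:f_{A}(x)>M\}$ is $\{x:f_{A}(x)\leq M\}$, so a single hyperplane separating the two super-level sets immediately yields $K_{H}=M$; under the partial order $\preceq$ this breaks down, exactly as you say, because the set where $f_{A}^{\mathbb{D}}\preceq M$ fails is $\{f_{A}^{1}>M^{1}\}\cup\{f_{A}^{2}>M^{2}\}$, which need be neither convex nor disjoint from the corresponding set for $B$, so no single hypersurface need realize $K_{H}=M$ at all. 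Your repair --- apply classical real separation to each idempotent component $A_{i},B_{i}$ (each is an ordinary convex fuzzy set with intersection grade $M^{i}$), obtain a hyperplane $H_{i}$ with $M_{H_{i}}^{i}=M^{i}$ (padding the other coordinate of $K_{H_{i}}$ by $1$), and combine with your easy inequality $M_{H}\succeq M$ to get $\inf_{H}M_{H}=M^{1}\mathbf{e}_{1}+M^{2}\mathbf{e}_{2}=M$ --- is the correct fix, and your caveat that $D=1-M$ then holds only as a supremum in the $\mathbb{D}$-lattice, possibly realized by two different hyperplanes rather than one, is exactly what the paper's proof-by-citation glosses over: it is the substantive way the hyperbolic theorem differs from Zadeh's. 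Two minor inaccuracies, neither fatal: $\mathbb{D}$-boundedness (boundedness of $\Gamma_{\alpha}$ for $\alpha\succ 0$, which constrains \emph{both} coordinates) does not imply boundedness of the single-coordinate sets $\{x:f_{A}^{i}(x)\geq a\}$; and no strictness of separation is needed anyway, since in $E^{n}$ any two disjoint nonempty convex sets admit a weakly separating hyperplane, so boundedness plays no role in producing $H_{i}$.
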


\begin{proof}
The proof of this theorem is similar to the proof of the separation theorem
for convex ordinary fuzzy sets (\cite{Za}, page no- 352).
\end{proof}

\end{document}